\newtheorem*{theorem*}{Theorem}
\newtheorem{maintheorem}{Theorem}[section]
\newtheorem{theorem}{Theorem}[section]
\newtheorem{lemma}[theorem]{Lemma}
\newtheorem{proposition}[theorem]{Proposition}
\theoremstyle{definition}
\newtheorem{definition}[theorem]{Definition}
\newtheorem{example}[theorem]{Example}
\newtheorem*{Acknowledgements}{Acknowledgements}
\newtheorem{remark}[theorem]{Remark}
\def\semicolon{;}
\def\applytolist#1{
    \expandafter\def\csname multi#1\endcsname##1{
        \def\multiack{##1}\ifx\multiack\semicolon
            \def\next{\relax}
        \else
            \csname #1\endcsname{##1}
            \def\next{\csname multi#1\endcsname}
        \fi
        \next}
    \csname multi#1\endcsname}
\def\calc#1{\expandafter\def\csname b#1\endcsname{{\mathbb #1}}}
\def\calc#1{\expandafter\def\csname bf#1\endcsname{{\mathbf #1}}}
\def\calc#1{\expandafter\def\csname c#1\endcsname{{\mathcal #1}}}
\def\calc#1{\expandafter\def\csname cal#1\endcsname{{\mathcal #1}}}
\def\calc#1{\expandafter\def\csname s#1\endcsname{{\mathscr #1}}}
\def\calc#1{\expandafter\def\csname f#1\endcsname{{\mathfrak #1}}}
\def\calc#1{\expandafter\def\csname tb#1\endcsname{{\text{\textbf{#1}}}}}
\def\calc#1{\expandafter\def\csname u#1\endcsname{{{\textrm{#1}}}}}
\def\calc#1{\expandafter\def\csname cl#1\endcsname{{{\overline{#1}}}}}
\newcommand{\an}{\textrm{an}}
\newcommand{\R}{\mathbb{R}}
\newcommand{\trop}{\textrm{trop}}
\newcommand{\Trop}{\textrm{Trop}}
\newcommand{\SL}{\textrm{SL}}
\newcommand{\GL}{\textrm{GL}}
\newcommand{\spec}{\textrm{Spec}}
\newcommand{\val}{\textrm{val}}
\newcommand{\Hom}{\textrm{Hom}}
\newcommand{\Sigmabar}{\overline{\Sigma}}
\DeclareMathOperator{\codim}{codim}
\DeclareMathOperator{\Star}{Star}
\title{Spherical tropical curves are balanced} 
\date{}
\author{Desmond Coles}
\address{The University of Texas at Austin,
Department of Mathematics,
Austin, TX 78712,
USA
}
\email{dcoles@utexas.edu}
\begin{document}

\begin{abstract}
One of the characterizing features of tropicalizations of curves in an algebraic torus is that they are {balanced}. Tevelev and Vogiannou introduced a spherical tropicalization map for spherical homogeneous spaces $G/H$, where $G$ is a reductive group. This map generalizes the tropicalization map for algebraic tori. We prove a balancing condition for spherical tropicalizations of curves in $G/H$ that generalizes the balancing condition for tropicalizations of curves contained in an algebraic torus. We give examples and describe the relationship with Andreas Gross's balancing condition for tropicalizations of subvarieties of toroidal embeddings.
\end{abstract}

\maketitle

\tableofcontents

\section{Introduction}

Let $k$ be an algebraically closed and trivially valued field of characteristic 0, let $G$ be a (connected) reductive group over $k$ such as an algebraic torus $T$, $\GL_n$, or $\SL_n$. Let $G/H$ be a spherical homogeneous space of dimension $d$. Recall that a variety equipped with a $G$ action is \textit{spherical} if there is a Borel subgroup $B$ in $G$ with an open orbit. Toric varieties, (partial) flag varieties (in particular Grassmannians), the basic affine space $G/U$, and symmetric varieties (including reductive groups themselves and their wonderful compactifications) are all spherical varieties. Many facts about toric varieties generalize to spherical varieties, in particular there is a combinatorial classification of spherical varieties, often called the `Luna-Vust theory', that generalizes the correspondence between toric varieties and fans \cite{LunaVust,Knop}.
As in \cite{TVSpherTrop}, \cite{NashGlobal}, and \cite{ColesAnalytic} there is a tropicalization map for spherical varieties given as follows. Let $G/H$ be a spherical homogeneous space. There is a finite dimensional vector space $N_{\R}(G/H)$ associated to $G/H$. The set of $G(k)$-invariant valuations on $k(G/H)$, which we denote $\calV$, is identified with a finitely generated convex cone in $N_{\R}(G/H)$. 

Let $K$ be an algebraically closed non-trivially valued extension of $k$ and let $\gamma \in G/H(K)$. 
Then define $\trop_G\colon G/H(K)\rightarrow \cV$ by  $\gamma \mapsto \textrm{val}_{\gamma} $ where:
\[
 \textrm{val}_{\gamma}(f)= \textrm{val}_K(g\cdot f(\gamma)) \textrm{ for $g\in U_f$}
\]
here $f$ is an element of $k(G/H)$ and $U_f$ is a Zariski dense open in $G/H(k)$ such that the right hand side is constant for $g\in U_f$. When $G=T$ is an algebraic torus, and $H$ is the trivial subgroup, this is the Kajiwara-Payne tropicalization map as in \cite{PayneLimit}. Several foundational results from the toric case have been extended to the spherical case: Nash showed that the spherical tropicalization map extends from homogeneous spaces to arbitrary spherical varieties in the same way as the tropicalization map for tori extends to toric varieties  \cite{NashGlobal}, Tevelev and Vogiannou showed that spherical tropical compactifications exist  \cite{TVSpherTrop}, Kaveh and Manon introduced a Gröbner theoretic approach to spherical tropicalization as well as a notion of amoeba \cite{KMGrob}, and in \cite{ColesAnalytic} it is shown that the tropicalization map extends to a continuous map from the Berkovich analytification $\trop_G\colon G/H^{\an}\rightarrow \cV$, and that this map admits a section. Readers may also want to see \cite{KMSurvey}, \cite{NashPreprint}. \cite{VogThesis}, and \cite{ColesUlirsch} for more background on spherical tropicalization.
A fundamental result of tropical geometry is that when $Z$ is an irreducible $n$-dimensional subvariety of a toric variety, the tropicalization of $Z$ has the structure of a purely $n$-dimensional \textit{balanced} weighted fan which is connected in codimension 1 \cite{TropElimTheory, MaclaganSturmfels,GublerGuide}.
Spherical tropicalizations of subvarieties do not in general display these qualities (see Example \ref{example_GLNGLNGLN}), though it is known that the spherical tropicalization of a subvariety is the support of a rational fan in $N_{\R}(G/H)$ \cite[Theorem 1]{TVSpherTrop}.
In this article we will describe the balancing condition for spherical tropicalizations of curves defined over a trivially valued field.

Let $\Trop_T(C)$ be the tropicalization of a curve $C$ in an algebraic torus $T$ of dimension $d$. Then $\Trop_T(C)$ is the support of a 1-dimensional polyhedral complex in $\bR^d$, in particular $\Trop_T(C)$ has a unique fan structure. For each 1-dimensional cone $\sigma$ in this fan let $v_{\sigma}$ be the primitive integer vector that spans $\sigma$. A primitive integer vector is one with integer coordinates that do not have a common divisor. Tropicalization attaches a weight $m_{\sigma}$ to each cone $\sigma$ (as described in the following paragraph), and then we have:
\[
\sum\limits_{\sigma}m_{\sigma}v_{\sigma}=0.
\]
This property of tropical curves is a crucial part in using tropical geometry to solve enumerative problems in algebraic geometry, see for example Mikhalkin's correspondence theorem \cite{Mikhalkin1}, or the applications to log geometry in \cite{DhruvStable}.

The underlying machinery of balancing for spherical tropical curves is the same as in the case of toric varieties. For $C$ in an algebraic torus $T$ the weights of $\Trop_T(C)$ are given by intersections of a tropical compactification of $C$ with the boundary of the associated toric variety \cite{TropElimTheory,GrossIntersection}. Then it can be shown that the tropicalization in fact contains the same data as the Chow cohomology class determined by $C$ in the toric variety and it follows from the description of Chow cohomology of toric varieties in \cite{FSToric} that the curve must be balanced. The result in \cite{FSToric} relies upon the earlier work in \cite{FMSS} that gives a description of the Chow cohomology for spherical varieties. We apply the results in \cite{FMSS} in a manner analogous to those in \cite{TropElimTheory} to arrive at a balancing condition in the spherical case. 

Let $C$ be a curve over $k$ in $G/H$. The set $\Trop_G(C)$ has the structure of a fan in $\cV$ and by \cite{TVSpherTrop} this fan determines a tropical compactification of $C$. Let $X$ be a smooth $G$-equivariant toroidal embedding of $G/H$ such that the fan associated to $X$ contains $\Trop_G(C)$ as a subfan. 
Let $\overline{C}$ be the closure of $C$ in $X$.
Given a cone $\sigma$ in $\Trop_G(C)$ we can define the \textit{weight} associated to $\sigma$, denoted $m_{\sigma}$, to be
\[
m_{\sigma}:=\deg\left([\overline{C}]\cdot [D_{\sigma}]\right)
\]
where $D_{\sigma}$ is the boundary divisor in $X$ associated to $\sigma$. For the balancing condition we need correction factors given by $B$-invariant prime divisors in $G/H$, let $E_j$ be such a divisor (these divisors give rise to the \textit{colors} in the colored fans). Then we define the \textit{colored weight}, denoted $m_j$, to be
\[
m_j:=\deg\left([\overline{C}]\cdot [\overline{E_{j}}]\right)
\]
where $\overline{E_j}$ is the closure of $E_j$ in $X$. Each such divisor $E_j$ determines a vector $v_j$ in $N_{\R}(G/H)$ (See Section \ref{Section_TheLunaVustTheory}). Let $v_{\sigma}$ be the primitive integer vector that generates the cone $\sigma$.
In Section \ref{Section_balanacingTWO} we show that $X$ exists and that the numbers $m_{\sigma}$ and $m_j$ are independent of the choice of $X$. Then we have the following balancing condition.
\begin{maintheorem}\label{mainthm_balancingforcurves}
    Let $G/H$ be a spherical homogeneous space and let $C$ be a curve in $G/H$ (defined over $k$). Then $\Trop_G(C)$ satisfies the following balancing condition:
    \[
    \sum\limits_{\sigma}m_{\sigma}v_{\sigma}=-\sum\limits_j m_j v_j.
    \]
\end{maintheorem}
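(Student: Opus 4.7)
My plan mirrors the classical toric argument, with characters of the torus replaced by $B$-semi-invariant rational functions on $G/H$. Let $M$ denote the lattice of $B$-weights of $k(G/H)^{(B)}/k^\times$; then $N_\R(G/H) = \Hom(M,\R)$, so it suffices to verify that for every $\chi \in M$,
\[
\sum_\sigma m_\sigma \langle \chi, v_\sigma\rangle \;+\; \sum_j m_j \langle \chi, v_j\rangle \;=\; 0,
\]
after which the stated identity follows by duality.

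The central step is a principal-divisor formula. Fix a smooth toroidal $G$-equivariant embedding $X$ of $G/H$ whose fan refines $\Trop_G(C)$ (existence is provided in Section \ref{Section_balanacingTWO}), and let $\overline{C}$ denote the closure of $C$ in $X$. For each $\chi \in M$, I would pick a $B$-semi-invariant rational function $f_\chi \in k(G/H)^{(B)}$ of weight $\chi$ and use Luna-Vust theory to compute its principal divisor on $X$:
\[
\textrm{div}(f_\chi) \;=\; \sum_\sigma \langle \chi, v_\sigma\rangle\, D_\sigma \;+\; \sum_j \langle \chi, v_j\rangle\, \overline{E_j}.
\]
The coefficient of a $G$-stable prime divisor $D_\sigma$ is $v_\sigma(f_\chi) = \langle\chi, v_\sigma\rangle$ by the very identification of $v_\sigma$ with a $G$-invariant valuation on $k(G/H)$. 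Because $X$ is toroidal, the only non-$G$-stable prime divisors of $X$ are the closures $\overline{E_j}$ of the $B$-stable prime divisors of $G/H$; and since $f_\chi$ is $B$-semi-invariant, its divisor on $G/H$ is $B$-invariant, hence an integer combination of the $E_j$, with coefficient along $E_j$ equal to $\langle \chi, v_j\rangle$ by the definition of $v_j$.

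Since $\overline{C}$ is complete (it is a tropical compactification by \cite{TVSpherTrop}) and $\textrm{div}(f_\chi)$ is principal on $X$, the projection formula yields
\[
0 \;=\; \deg\!\left([\overline{C}]\cdot[\textrm{div}(f_\chi)]\right) \;=\; \sum_\sigma \langle \chi, v_\sigma\rangle\, m_\sigma \;+\; \sum_j \langle \chi, v_j\rangle\, m_j,
\]
which is the required identity.

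The hard part will be rigorously verifying the divisor formula in the toroidal setting, particularly the assertion that every non-$G$-stable component of $\textrm{div}(f_\chi)$ on $X$ is the closure of some $E_j$; this relies essentially on the Luna-Vust classification of toroidal embeddings, which forces the boundary $X\setminus G/H$ to consist only of the $D_\sigma$. The alternative route flagged in the introduction, via the FMSS description of Chow cohomology of spherical varieties, repackages the same calculation: the relations among classes of $B$-stable divisors in $A^1(X)$ are exactly the principal divisors of the $f_\chi$, and pairing these relations with $[\overline{C}]\in A_1(X)$ produces the balancing identity directly. The remaining verifications (existence of $X$, independence of $m_\sigma$ and $m_j$ on the choice of $X$, completeness of $\overline{C}$) are precisely those handled in Section \ref{Section_balanacingTWO}.
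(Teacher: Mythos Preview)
Your proposal is correct and follows essentially the same approach as the paper: pair $[\overline{C}]$ with the principal divisor of each $B$-semi-invariant function $f_\chi$, using that on a toroidal $G/H$-embedding such a divisor is supported on the $D_\sigma$ and the $\overline{E_j}$ (the paper's Lemma~\ref{lemma_BinvDivs}) with coefficients $\langle\chi,v_\sigma\rangle$ and $\langle\chi,v_j\rangle$. The only difference is packaging: the paper phrases this via the FMSS identification $A_\bullet(X)=A_\bullet^B(X)$ and Kronecker duality, whereas you invoke the degree-zero property of a principal divisor on a complete curve directly---as you yourself note, these are the same computation.
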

\noindent
When $G$ is a torus and $H$ is trivial there are no $B$-stable divisors $E_j$ in $G/H$, as $B=G=T$, so this specializes to the balancing condition for curves in tori. See Section \ref{Section_Examples} for examples of the balancing condition.

A key idea in this proof is that we can work with toroidal spherical varieties, i.e. spherical varieties $X$ such that the embedding of the open $G$-orbit, $G/H$, into $X$ is toroidal. This is equivalent to the colored fan associated to $X$ having no colors and as we discuss in Section \ref{Section_toroidalembeddings} spherical tropicalization agrees with the tropicalization map for toroidal embeddings.
Furthermore, in \cite{GrossIntersection} there is a balancing condition for tropicalizations of subvarieties of toroidal embeddings, and in Section \ref{section_closingremarks} we discuss how our balancing condition relates to the one presented in \cite{GrossIntersection}.

The paper is organized as follows. First we present the Luna-Vust theory of spherical varieties which describes spherical varieties in a combinatorial manner that generalizes the correspondence between toric varieties and fans. This section contains important notation. We then review spherical tropicalization.
In the following section we review the results on tropical compactifications in \cite{TVSpherTrop}. Finally, we prove the balancing condition. In the last section we include some discussion of how the ideas in this article may relate to a general structure theory for spherical tropical varieties. We assume the reader has some familiarity with tropical geometry and Berkovich geometry throughout.

\begin{Acknowledgements}
I want to thank María Angélica Cueto, Alex Fink, Andreas Gross, Eric Katz, Gary Kennedy, Amy Li, Isaac Martin, Evan Nash, Sam Payne, Dhruv Ranganathan, Kris Shaw, Bernd Siebert, Frank Sottile, and Martin Ulirsch for helpful conversations. In particular I would like to thank my advisor Sam Payne for his support and encouragement while writing this article, as well as the math department at The University of Texas at Austin for providing a welcoming and supportive environment to do mathematics. I was supported by NSF DMS–2302475 and NSF DMS–2053261 while completing this work.
\end{Acknowledgements}

\section{The Luna-Vust theory of spherical varieties}\label{Section_TheLunaVustTheory}
In this section we review the Luna-Vust classification of spherical varieties. This is a generalization of the correspondence between toric varieties and fans in \cite{FultonToricBook}. Recall that the correspondence between toric varieties and fans can be phrased as follows: if $T$ is an algebraic torus then $T$-equivariant partial compactifications of $T$ correspond to rational fans in $N_{\bR}=N\otimes_{\bZ}\bR$ (where $N$ is the cocharacter lattice of $T$). For a spherical homogeneous space $G/H$ the Luna-Vust theory gives a correspondence between `colored fans' and $G$-equivariant partial compactifications of $G/H$. By a partial compactification we mean a $G$-variety $X$ with a $G$-equivariant open embedding of $G/H$ into $X$, we call such a variety $X$ a `$G/H$-embedding' (note that $X$ is automatically spherical and any spherical variety has an open $G$-orbit that is $G$-equivariantly isomorphic to $G/H$ for some $H$).
We will follow the exposition of \cite{Knop}. First, we present some data associated to $G/H$ which allows us to state the correspondence precisely.

Let $k(G/H)$ be the function field of $G/H$. Define the group of $B$\textbf{-semi-invariant} rational functions, denoted $k(G/H)^{(B)}$, to be the set
\[
\{f\in k(G/H) \mid \textrm{there is a character of $B$, denoted $\chi_f$, such that for $b\in B(k)$ we have $b\cdot f =\chi_f(b)f$}\}.
\]
There is a homomorphism of groups from $k(G/H)^{(B)}$ to the character lattice of $B$ given by $f\mapsto \chi_f$, denote the image by $M(G/H)$. We call $M(G/H)$ the \textbf{weight lattice} of $G/H$. Let $N(G/H)$ be the dual lattice to $M(G/H)$, and set $N_{\bR}(G/H)=N(G/H)\otimes_{\bZ} \bR$. Let $\calV$ be the collection of $G(k)$-invariant valuations on $k(G/H)$. Then there is a map $\varrho\colon \calV \rightarrow N_{\bR}(G/H)$ given by $\varrho(\val)(\chi_f)=\val(f)$. Not only is this map well-defined but it is injective and the image is a finitely generated rational convex cone in $N_{\bR}(G/H)$ \cite[Corollary 1.8, Corollary 5.3]{Knop}. We identify $\calV$ with its image in $N_{\bR}(G/H)$. We call $\cV$ the \textbf{valuation cone} of $G/H$. There is a finite set of codimension 1 $B$-stable subvarieties $\calD=\{E_1,\ldots E_r\}$, and for each element we can also define a rational element of $N_{\R}(G/H)$ given by $\varrho(\val_{E_i})$ where $\varrho$ is defined the same as above. We say that $\calD$ is the \textbf{palette} of $G/H$. More generally if $X$ is a $G/H$-embedding we write $\calD(X)$ for the codimension 1 $B$-stable subvarieties. If $Y$ is a $G$-orbit in $X$ then we write $\calD_Y(X)$ for the elements of $\calD(X)$ such that $Y\subseteq D$. Because $X$ is spherical $ \calD(X)$ is finite \cite[Theorem 1]{Vinburg}. We can now define colored cones and colored fans.
\begin{definition}
    A \textbf{colored cone} is a pair $(\sigma, \calF)$, where $\sigma\subseteq N_{\bR}(G/H)$ is a strictly convex cone and $\calF\subseteq \calD$, and they satisfy the following:
    \begin{enumerate}[label=\textbf{CC\arabic*}, leftmargin=2cm, topsep=0cm, itemsep=0cm]
    \item $\sigma$ is generated by $\varrho(\cF)$ and finitely many rational elements of $\cV$
    \item The relative interior of $\sigma$ intersects $\cV$
    \item $0\notin \varrho(\cF)$.
\end{enumerate}
\end{definition}
\noindent
The elements of $\calF$ are referred to as the \textbf{colors} of the colored cone. A \textbf{colored face} of a colored cone $(\sigma, \calF)$ is a colored cone $(\sigma',\calF')$ such that $\sigma'$ is a face of $\sigma$ and $\calF'=\calF\cap \varrho^{-1}(\sigma')$. 
\begin{definition}
    A \textbf{colored fan} $\fF$ is a finite collection of colored cones such that:
\begin{enumerate}[label=\textbf{CF\arabic*}, leftmargin=2cm, topsep=0cm, itemsep=0cm]
    \item if $(\sigma,\calF)\in\fF$ then every colored face of $(\sigma,\calF)$ is contained in $\fF$.
     \item For each point of $v\in \calV$ there is at most one colored cone $(\sigma,\calF)$ such that $v$ is contained in the relative interior of $\calV$.
\end{enumerate}
\end{definition}

\begin{remark}
    There are some subtleties about colored fans to address. First, \cite{Knop} considers colored cones $(\sigma,\calF)$ where there may be $E\in \calF$ such that $\varrho(E)=0$. When $0\notin \calF$ and $\sigma $ is strictly convex, Knop says the colored cone $(\sigma, \calF)$ is `strictly convex'. We do not need to consider cones where $0\in \calF$, so we do not. That is, all of our colored cones are assumed to be `strictly convex' in Knop's sense. Secondly, the underlying collection of cones need not form a fan. It is true however that if the colored fan contains no colors (the fan is `toroidal' in other words) then the underlying set of cones does form a fan. The failure to form a fan is a result of the fact that colored cones need only to intersect along faces in the relative interior of $\calV$. The failure of the colored fan to be an actual fan has a geometric meaning \cite{Gagliardi}.
\end{remark}

Let $X$ be a $G/H$-embedding and let $Y\subseteq X$ be a $G$-orbit. Then we can construct a colored cone as follows. Define
\begin{align*}
    \calB_Y(X)&:= \{D\in \calD_Y(X) \mid \textrm{such that $D$ is $G$-stable}\} \\
    \calF_Y(X)&:= \{D\in \calD_Y(X) \mid \textrm{such that $D$ is \emph{not} $G$-stable}\}.
\end{align*}
Then let $\sigma_Y(X)$ be the cone generated by $\varrho(\calB_Y(X))$ and $\varrho(\calF_Y(X))$. Note that $\calF_Y(X)$ is naturally identified with a subset of $\calD$ because elements of $\calD(X)$ which are not $G$-stable must be the closure of some element of $\calD$. We say that $G/H$-embedding is \textbf{simple} if there is a unique closed $G$-orbit.
\begin{theorem}\cite[Theorem 3.1]{Knop}
    Let $X$ be a $G/H$-embedding with $G$-orbit $Y$. Then we have that the pair $(\sigma_Y(X),\calF_Y(X))$ is a colored cone. For $X$ a simple embedding the map $X\mapsto (\sigma_Y(X),\calF_Y(X))$ (where $Y$ is the closed $G$-orbit), gives a bijection between simple $G/H$-embeddings and colored cones in $N_{\bR}(G/H)$.
\end{theorem}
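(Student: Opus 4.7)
The plan is to verify the colored-cone axioms directly, and then reduce the classification statement to the toric case via a local structure theorem.

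For the first assertion, axiom CC1 is immediate from the construction, since elements of $\varrho(\calB_Y(X))$ come from $G$-invariant divisorial valuations and hence are rational points of $\calV$. Axiom CC2 would be established by producing a $G$-invariant valuation on $k(G/H)$ whose center on $X$ is $Y$: such a valuation can be obtained as the divisorial valuation of a $G$-equivariant blowup whose exceptional divisor dominates $Y$, and its image in $N_{\bR}(G/H)$ lies in the relative interior of the cone generated by $\varrho(\calD_Y(X))$. Strict convexity follows from the order-reversing correspondence between orbits and colored faces, which forces $Y$ to contribute a single distinguished face. For CC3, the condition $\varrho(E_j) = 0$ would mean that $E_j$ imposes no condition on any $B$-semi-invariant, which is incompatible with $E_j$ being a $B$-stable prime divisor containing a proper $G$-orbit.

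For the bijection in the simple case, I would invoke the Brion-Luna-Vust local structure theorem: removing from $X$ the closures of the $B$-divisors that do not meet the closed orbit $Y$ gives a $P$-stable affine open $X_0$ which decomposes as $X_0 \cong R_u(P) \times Z$, where $P$ is the parabolic stabilizing the open $B$-orbit and $Z$ is an affine Levi slice on which an appropriate quotient torus of the Levi acts with an open orbit. The colored cone $(\sigma_Y(X), \calF_Y(X))$ then encodes precisely the affine toric combinatorics of $Z$ (via $\sigma_Y(X) \cap \calV$) together with the information of which colors of $G/H$ pass through $Y$ (via $\calF_Y(X)$). Injectivity then reduces to the affine toric case, combined with the observation that $X$ is recovered from $X_0$ as $G \cdot X_0$. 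Surjectivity is proved by reversing the construction: given $(\sigma, \calF)$, one builds the affine toric slice dictated by the cone, promotes it to a $P$-variety via $R_u(P)$, and then glues $G$-translates to obtain a simple $G/H$-embedding whose combinatorial invariant is $(\sigma, \calF)$.

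The main obstacle is the local structure reduction itself. One must verify that the slice $Z$ is genuinely governed by the toric combinatorics of $\sigma_Y(X)$ and, more subtly, that the colors in $\calF_Y(X)$ — which are only recorded as a subset of the abstract palette $\calD$ — correctly track which $B$-stable divisors of $G/H$ acquire $Y$ in their closure. This is precisely where spherical geometry departs from toric geometry: whether a given color is blown up in passing from $G/H$ to $X$ is not detectable from valuation data alone, which is the reason the combinatorial datum $\calF$ must be carried in addition to the cone $\sigma$.
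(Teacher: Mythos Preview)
The paper does not prove this statement; it is quoted as background from \cite[Theorem 3.1]{Knop} with no argument supplied. There is therefore nothing in the paper to compare your proposal against.

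That said, your sketch is broadly in the spirit of the standard proof in the Luna--Vust/Knop literature: the local structure theorem does reduce a simple embedding to a toric slice, and this is indeed how the bijection is established. One point where your outline is off is the verification of CC3. You argue that $\varrho(E_j)=0$ is ``incompatible with $E_j$ being a $B$-stable prime divisor containing a proper $G$-orbit,'' but this is not true in general: colors with $\varrho(E_j)=0$ do occur (e.g.\ for certain horospherical varieties), which is exactly why Knop's original definition allows them and why the paper explicitly remarks that it is \emph{restricting} to the strictly convex case rather than deriving CC3 as a consequence. So CC3 is not something you prove here; it is a standing hypothesis on the class of embeddings under consideration, and the bijection as stated is only with that subclass. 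Your argument for strict convexity of $\sigma$ is also too quick: strict convexity of the cone is equivalent to the embedding being quasi-affine (or to the closed orbit being of maximal rank in an appropriate sense), and deducing it from the orbit--face correspondence is circular since that correspondence is proved after the classification. The rest of your outline (CC1, CC2, and the use of the slice for injectivity and surjectivity) is a reasonable roadmap, but each step hides substantial work that is carried out in \cite{Knop} and \cite{LunaVust}.
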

\noindent
Note that if $G=G/H=T$ is an algebraic torus then simple $T$-embeddings are exactly affine toric varieties and this theorem specializes to the correspondence between affine toric varieties and cones.
\begin{theorem}\cite[Theorem 3.3]{Knop}\label{thm_coloredfans}
    Define $\fF(X)=\{(\sigma_Y(X),\calF_Y(X)) \mid Y \textrm{ is a $G$-orbit of $X$})\}$. Then $\fF(X)$ is a colored fan and the association $X\mapsto \fF(X)$ is a bijection between isomorphism classes of $G/H$-embeddings and colored fans in $N_{\bR}(G/H)$.
\end{theorem}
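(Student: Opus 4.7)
The plan is to bootstrap from the preceding simple--embedding theorem by showing that any $G/H$-embedding $X$ admits a canonical cover by simple open $G$-invariant subvarieties, one for each $G$-orbit. For a $G$-orbit $Y \subseteq X$, let $X_Y$ denote the union of all $G$-orbits $Y'$ with $Y \subseteq \overline{Y'}$. I would first verify $X_Y$ is a $G$-invariant open subvariety of $X$: it is the complement of the union of those orbit closures that do not contain $Y$, a $G$-stable closed subset (finite since $X$ is Noetherian and contains only finitely many $G$-orbits, which in turn follows from the existence of the open $B$-orbit and Bruhat-type arguments). Since $Y$ is the unique closed $G$-orbit of $X_Y$, the variety $X_Y$ is a simple $G/H$-embedding, so by the preceding theorem it corresponds to the colored cone $(\sigma_Y(X), \calF_Y(X))$.

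Next I would verify that $\fF(X) = \{(\sigma_Y(X), \calF_Y(X))\}_Y$ is a colored fan. For \textbf{CF1}, the face relation among colored cones in $\fF(X)$ mirrors the orbit--closure relation: if $Y \subseteq \overline{Y'}$, then a $B$-stable prime divisor passing through $Y'$ is exactly a $B$-stable prime divisor passing through $Y$ that also meets $Y'$. Translating through $\varrho$ and using that $\sigma_{Y}(X)$ is generated by the images of divisors in $\calD_Y(X)$, one obtains that $(\sigma_{Y'}(X), \calF_{Y'}(X))$ is a colored face of $(\sigma_Y(X), \calF_Y(X))$, and every colored face arises this way from some orbit in $\overline{Y}$. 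For \textbf{CF2}, separatedness of $X$ forces every $G$-invariant valuation on $k(G/H)$ to have at most one center in $X$; its center, when it exists, is the unique $G$-orbit $Y$ such that $\varrho(\val)$ lies in the relative interior of $\sigma_Y(X)$, so the relative interiors of distinct cones in $\fF(X)$ cannot share a point of $\calV$.

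For the inverse direction, given a colored fan $\fF$ I would build a $G/H$-embedding by gluing. For each $(\sigma, \calF) \in \fF$, the simple--embedding theorem produces a simple $G/H$-embedding $X(\sigma, \calF)$. Any two such embeddings are to be glued along the simple embedding $X(\tau, \calE)$ attached to their common colored face $(\tau, \calE)$: by the simple case applied to the inclusion of colored cones, $X(\tau, \calE)$ embeds as a $G$-invariant open subvariety of each $X(\sigma_i, \calF_i)$. The cocycle condition on triple overlaps is automatic because all identifications are $G$-equivariant open immersions extending the identity on $G/H$, and such immersions between simple $G/H$-embeddings are unique when they exist. Separatedness of the glued scheme follows from \textbf{CF2} by the valuative criterion, after reducing to $G(k)$-invariant valuations by an equivariance argument (any two specializations of a valuation produce two candidate centers, hence two colored cones sharing a point of $\calV$ in their relative interiors).

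The main obstacle is this separatedness verification in the inverse construction, and the related bookkeeping that the subtlety in the Remark causes no harm: even though the underlying cone collection of a colored fan need not be a fan away from $\calV$, what matters for the gluing is uniqueness of common colored faces, not a literal intersection of cones in $N_\R(G/H)$, and \textbf{CF2} is exactly tailored to this. Once separatedness is in place, one checks that $X \mapsto \fF(X)$ and $\fF \mapsto X(\fF)$ are mutually inverse: starting from $X$, the simple cover $\{X_Y\}$ reassembles $X$, and starting from $\fF$, the orbit stratification of the glued scheme recovers $\fF$ cone by cone via the simple--embedding correspondence.
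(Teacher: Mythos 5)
This theorem is not proved in the paper at all: it is quoted from Knop (\cite[Theorem 3.3]{Knop}), so there is no in-paper argument to compare against, and your proposal has to be measured against Knop's own proof. Your overall route is indeed the standard one from that source: take the simple-embedding classification as given, cover an arbitrary $G/H$-embedding by the open simple pieces $X_Y$ (using finiteness of $B$-orbits for openness), glue simple embeddings along common colored faces in the converse direction, and check separatedness with invariant valuations. So the architecture is right; the problem is that two of the steps you treat as routine are exactly where the substance of Knop's argument lies.

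First, your verification of \textbf{CF1} does not work as stated. From $Y\subseteq \overline{Y'}$ you get $\calD_{Y'}(X)\subseteq\calD_Y(X)$, hence $\sigma_{Y'}(X)$ is generated by a subset of the generators of $\sigma_Y(X)$; but a subcone spanned by a subset of the generators need not be a face of the cone, and likewise $\calF_{Y'}(X)=\calF_Y(X)\cap\varrho^{-1}(\sigma_{Y'}(X))$ is not formal. The statement that $(\sigma_{Y'}(X),\calF_{Y'}(X))$ is a colored face of $(\sigma_Y(X),\calF_Y(X))$, and that every colored face arises from an orbit, is precisely Knop's Lemma 3.2 (quoted in this section of the paper as a separate result), whose proof uses the local structure theory and the identification of orbits with centers of invariant valuations, not the divisor bookkeeping you give. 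Second, the separatedness step in the gluing direction hides a real theorem: reducing the valuative criterion ``to $G(k)$-invariant valuations by an equivariance argument'' is not an automatic symmetrization but rests on Knop's earlier results that a valuation with two centers can be replaced by a $G$-invariant valuation with the same two centers (and on the fact that the center of an invariant valuation is the orbit whose cone's relative interior contains $\varrho(\val)$, which is itself one of the main theorems about simple embeddings and must be invoked, not asserted). You correctly flag this as the main obstacle, but as written it is a gap: with those two inputs supplied explicitly, your sketch becomes the standard proof; without them it does not close.
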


\begin{remark}
    To be precise, the association of a spherical variety to its fan is functorial and defines an equivalence of the two categories. In this article we will not make extensive use of this though so we will not discuss this in detail. Though when the fan $\fF(X)$ has no colors a morphism of colored fans is just a morphism of fans and such a morphism will induce a $G$-equivariant morphism of the corresponding emebddings. 
\end{remark}

We also have an orbit-cone correspondence for $G/H$-embeddings and we can describe the colored fan of an orbit closure using the colored fan $\fF(X)$. Let $Y$ be a $G$-orbit of $X$ and let $\overline{Y}$ be the closure in $X$.
\begin{proposition}\cite[Lemma 3.2]{Knop}
    The map $Z\mapsto (\sigma_Z(X), \calF_Z(X))$ defines a bijection $G$-orbits $Z$ such that $\overline{Z}\supseteq Y$ and colored faces of $(\sigma_Y(X),\calF_Y(X))$. In particular there is a bijection between $G$-orbits and colored cones of the colored fan of $X$.
\end{proposition}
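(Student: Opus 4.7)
The strategy is to reduce to the already-established classification of simple $G/H$-embeddings. For a $G$-orbit $Y$ of $X$, I would first introduce the open $G$-stable subvariety $X_Y \subseteq X$ obtained by removing from $X$ all $G$-orbit closures that do not contain $Y$. Then $X_Y$ is itself a $G/H$-embedding whose $G$-orbits are precisely those orbits $Z$ of $X$ with $Y \subseteq \overline{Z}$; in particular $Y$ is its unique closed orbit, so $X_Y$ is simple and by the classification theorem is determined by the colored cone $(\sigma_Y(X), \calF_Y(X))$. Since $B$-stable divisors in $X$ are in the same bijection with $B$-stable divisors in $X_Y$, the data $(\sigma_Y(X),\calF_Y(X))$ computed in $X$ matches the one computed in $X_Y$.

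Having reduced to $X_Y$, I would examine the bijection between colored faces and $G$-stable open sub-embeddings. For any other $G$-orbit $Z$ of $X_Y$, the simple embedding $X_Z$ sits as a $G$-stable open subvariety of $X_Y$ with closed orbit $Z$. Any $B$-stable divisor containing $Z$ must also contain $\overline{Z}$ and hence $Y$, giving the containments $\calB_Z(X) \subseteq \calB_Y(X)$ and $\calF_Z(X) \subseteq \calF_Y(X)$. Together with the fact that $X_Z$ is obtained from $X_Y$ by excising precisely the $G$-stable divisors that do not contain $Z$, this should identify $(\sigma_Z(X), \calF_Z(X))$ as a colored face of $(\sigma_Y(X), \calF_Y(X))$. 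Conversely, each colored face $(\sigma',\calF')$ of $(\sigma_Y(X),\calF_Y(X))$ corresponds via the classification theorem to a simple $G/H$-embedding that, by functoriality of the Luna--Vust correspondence, admits a canonical $G$-equivariant open immersion into $X_Y$; the closed orbit of this sub-embedding provides the $G$-orbit $Z$ of $X$ with $\overline{Z}\supseteq Y$ and associated colored cone $(\sigma',\calF')$.

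The second statement then follows by observing that every $G$-orbit of $X$ has a closed $G$-orbit in its closure (by Noetherianity, since $G$-stable closed subsets always contain a closed orbit), so every orbit of $X$ corresponds via the first statement to a colored face of the colored cone of some closed orbit, hence to a colored cone of $\fF(X)$. Injectivity across different choices of $Y$ is clear because an orbit $Z$ is recovered from its colored cone via the simple embedding $X_Z$, independently of which $Y\subseteq \overline{Z}$ is chosen. The step I expect to be the main obstacle is verifying the face property cleanly: one must show that the inclusion $\sigma_Z(X) \subseteq \sigma_Y(X)$ occurs along an actual face of $\sigma_Y(X)$ and not merely as a subcone, and that the colors are correctly tracked. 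This requires carefully analyzing how the relative interior of the face meets the valuation cone $\calV$ and how elements of the palette in $\calF_Y(X)\setminus \calF_Z(X)$ pick up $G$-stability under passage from $X_Y$ to $X_Z$.
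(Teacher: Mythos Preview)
The paper does not prove this proposition at all: it is stated as background material in Section~\ref{Section_TheLunaVustTheory} with a citation to \cite[Lemma 3.2]{Knop}, and no argument is given. There is therefore nothing in the paper to compare your proposal against. Your sketch is the standard reduction to the simple case via the open subvariety $X_Y$, which is essentially how Knop proceeds; if you want to see the details of the face verification you flag at the end, you should consult Knop's original argument directly.
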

\noindent
Furthermore we have that if $Y$ is a spherical homogeneous space, then $\overline{Y}$ is a $Y$-embedding, and  we can describe the colored fan $\fF(Y)$. Let $Y$ correspond to the colored cone $(\sigma,\calF)\in\fF(X)$, i.e. $(\sigma_Y(X),\calF_Y(X))=(\sigma,\calF)$. Then define $M(\sigma):=\sigma^{\perp}\cap M(G/H)\subseteq M_{\R}(G/H)$ is a lattice of characters and it consists of characters corresponding to rational functions which do not have poles or zeros on $Y$. So restriction from $X$ to $Y$ gives a map $M(\sigma)\rightarrow M(Y)$ and this map is an isomorphism by Theorem 6.3 of \cite{Knop}. So $M(\sigma)=M(Y)$. Notice that we have an inclusion $\iota \colon M(Y)\hookrightarrow M(G/H)$. Furthermore, if $N(\sigma)=\Hom(M(\sigma),\bZ)$ then it follows that $N(\sigma)=N(Y)$. The map $\iota$ induces a surjection $\iota^*\colon N_{\R}(G/H)\rightarrow N_{\R}(Y)$. Set
\[
\calD_{\iota}:=\{D\in \calD(G/H) \mid \textrm{$D\cap Y$ is a divisor of $Y$}\}
\]
\[
\textrm{Star}(\sigma,\calF):=\{(\iota^*(\tau),\calE\cap \calD_{\iota}) \mid (\tau,\calE)\in \mathfrak{F}(X) \textrm{ and $(\tau,\calE)$ is a colored face of $(\sigma,\calF)$}\}.
\]
If the set $\calF$ is empty we may write $\textrm{Star}(\sigma)$ for convenience.
Then we have the following.
\begin{theorem}\cite[Theorem 4.5]{Knop}\label{thm_FanOfOrbit}
     We have that $M(Y)=M(\sigma)$, $N(Y)=N(\sigma)$, and $\calD(Y)=\calD_{\iota}$. Furthermore $\mathfrak{F}(\overline{Y})$ is $\Star(\sigma,\calF)$.
\end{theorem}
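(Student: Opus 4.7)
The plan is to prove the theorem in two main steps: first, identifying the intrinsic invariants $M(Y)$, $N(Y)$, and $\calD(Y)$ of the homogeneous space $Y$; second, describing the colored fan of the orbit closure $\overline{Y}$ as the star of $(\sigma,\calF)$.

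For the first step, I would begin by applying the Brion--Luna--Vust local structure theorem. This produces, in a $G$-stable affine open neighborhood of $Y$, a $B$-stable affine slice splitting (after a choice of Levi) as the product of an affine space with a simple affine $G/H$-embedding whose colored cone is $(\sigma,\calF)$. This reduces the lattice computation to the simple affine case. Using the multiplicity-one property of $B$-isotypic components for spherical varieties, the coordinate ring decomposes as
\[
k[X]=\bigoplus_{\chi\in C} k\cdot f_\chi,
\]
where $C\subseteq M(G/H)$ is the sub-monoid of characters $\chi$ such that $f_\chi\in k(G/H)^{(B)}$ extends regularly to $X$. For each $D\in \calB_Y(X)\cup \calF_Y(X)$ one has $\ord_D(f_\chi)=\langle \varrho(D),\chi\rangle$, so the ideal of $Y$ in $k[X]$ is generated by those $f_\chi$ with $\chi\in C$ failing to lie in $\sigma^{\perp}$. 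Taking the quotient,
\[
k[Y]=\bigoplus_{\chi\in C\cap\sigma^{\perp}} k\cdot \overline{f}_\chi,
\]
so the $B$-weights occurring on $Y$ span exactly $\sigma^{\perp}\cap M(G/H)=M(\sigma)$. This gives $M(Y)=M(\sigma)$, and $N(Y)=N(\sigma)$ follows by duality. The equality $\calD(Y)=\calD_\iota$ comes from tracking non-$G$-stable $B$-stable prime divisors under restriction: a color of $G/H$ yields a codimension-one $B$-stable subvariety of $Y$ precisely when its intersection with $Y$ is a divisor of $Y$, which is the defining condition for $\calD_\iota$.

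For the second step, the $G$-orbits of $\overline{Y}$ are exactly the $G$-orbits $W$ of $X$ with $W\subseteq \overline{Y}$. Under the orbit-cone correspondence of Proposition 3.2, these are in bijection with colored cones $(\tau,\calE)\in \mathfrak{F}(X)$ having $(\sigma,\calF)$ as a colored face. For such an orbit $W$, I need to identify its colored cone $(\sigma_W(\overline{Y}),\calF_W(\overline{Y}))$ inside $N_{\R}(Y)$ with $(\iota^*(\tau),\calE\cap \calD_\iota)$. Under the identification $M(Y)=M(\sigma)=\sigma^{\perp}\cap M(G/H)$ from the first step, each $B$-semi-invariant rational function on $\overline{Y}$ lifts to a $B$-semi-invariant on $X$ with weight in $\sigma^{\perp}$, and the valuation along a $G$-stable divisor of $\overline{Y}$ through $W$ matches, via $\iota^*$, the valuation of the corresponding divisor of $X$ containing $W$. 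This forces $\sigma_W(\overline{Y})=\iota^*(\tau)$, while $\calF_W(\overline{Y})=\calE\cap \calD_\iota$ follows directly from $\calD(Y)=\calD_\iota$.

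I expect the principal obstacle to be the lattice identification $M(Y)=M(\sigma)$: establishing that every $B$-semi-invariant rational function on $Y$ lifts to one on $X$ with weight in $\sigma^{\perp}$ requires both the structural input of the local structure theorem and the delicate multiplicity-one property of $B$-isotypic components. Once this is secured, the remaining assertions, including the fan description of $\overline{Y}$, amount to tracing orbit-cone data through the restriction map $\iota^*$.
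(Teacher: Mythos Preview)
The paper does not prove this theorem. It is stated as a citation to \cite[Theorem 4.5]{Knop}, and the only argument the paper offers is the short paragraph preceding the statement, which observes that $M(\sigma)=\sigma^{\perp}\cap M(G/H)$ consists of weights of $B$-semi-invariants with no zero or pole along $Y$, so that restriction gives a map $M(\sigma)\to M(Y)$, and then invokes \cite[Theorem 6.3]{Knop} to conclude this map is an isomorphism. There is no proof of $\calD(Y)=\calD_\iota$ or of the star-fan description in the paper; both are simply attributed to Knop.

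So there is nothing substantive in the paper to compare your proposal against. What you have written is a plausible outline of how Knop's original argument goes (local structure theorem, multiplicity-one decomposition of $k[X]$, identification of the ideal of $Y$ via $\sigma^{\perp}$), and it is broadly in the right spirit. But note that the paper itself regards the hard step---surjectivity of $M(\sigma)\to M(Y)$, i.e.\ that every $B$-semi-invariant on $Y$ lifts---as a black box from \cite[Theorem 6.3]{Knop}, precisely the point you flag as the principal obstacle. If your goal is to reproduce the paper's treatment, you should simply cite Knop rather than attempt a self-contained proof.
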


We say that a $X$ is a \textbf{toroidal spherical embedding} if $\mathfrak{F}(X)$ contains no colors. Recall that an open embedding $U\hookrightarrow X$ is a `toroidal embedding' if it is étale locally given by the embedding of a torus into a toric variety. 
\begin{proposition}\cite[Proposition 7.13]{ColesUlirsch}\label{thm_toristor}
    The $G/H$-embedding $X$ is a spherical toroidal embedding if and only if the inclusion $G/H\hookrightarrow X$ is a toroidal embedding.
\end{proposition}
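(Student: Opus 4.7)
The plan is to exploit the local structure theorem for spherical varieties, which pins down the étale local geometry near each closed orbit, and then match this with the two definitions of ``toroidal''.

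First I would reduce to the case where $X$ is a simple $G/H$-embedding. Both conditions---that $\mathfrak{F}(X)$ has no colors, and that $G/H\hookrightarrow X$ is a toroidal embedding---can be checked on any $G$-stable open cover of $X$. By Theorem \ref{thm_coloredfans} and the orbit-cone correspondence, $X$ is covered by the simple $G/H$-embeddings associated to its maximal colored cones, and the fan of each such open has no colors iff $\mathfrak{F}(X)$ does. So I reduce to $X$ simple with closed $G$-orbit $Y$ and colored cone $(\sigma,\mathcal{F})$.

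For the forward implication, assume $\mathcal{F}=\emptyset$, and I would invoke the local structure theorem of Brion--Luna--Vust: there is a parabolic $P\subseteq G$, with unipotent radical $P_u$ and a Levi factor $L$, and an affine $L$-stable locally closed subvariety $Z\subseteq X$ meeting $Y$ such that the multiplication map $P_u\times Z\to X_0$ is a $P$-equivariant isomorphism onto a $B$-stable affine open $X_0$ meeting $Y$. In the toroidal case (no colors), the $L$-action on $Z$ factors through a quotient torus $T'=L/L_0$, and $Z$ is an affine $T'$-toric variety with open orbit $Z\cap(G/H)=T'$; this is essentially the content of ``toroidal'' at the level of the local model. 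Consequently $(G/H\cap X_0,\,X_0)\cong (P_u\times T',\,P_u\times Z)$. Since $P_u$ is an affine space, this pair is Zariski-locally isomorphic (up to a smooth factor) to a torus-toric pair, hence étale locally is precisely the embedding of a torus in a toric variety. Translating $X_0$ by $G(k)$ covers $X$, so $G/H\hookrightarrow X$ is toroidal.

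For the converse, suppose $G/H\hookrightarrow X$ is toroidal, and I would argue by contradiction that no color sits in $\mathfrak{F}(X)$. A color in $\mathfrak{F}(X)$ is, by definition, the closure in $X$ of some $E\in\mathcal{D}$ that contains a non-open $G$-orbit. Pick a point $y$ of such an orbit and an étale chart identifying a neighborhood of $y$ in $X$ with a neighborhood of a point in a toric variety $Z$, sending $G/H$ onto the open torus $T\subseteq Z$. The prime boundary divisors of $X$ passing through $y$ then correspond bijectively to the $T$-stable coordinate divisors of $Z$ through the chosen point, so in particular every $B$-stable prime divisor of $X$ meeting the chart is either pulled back from a $T$-stable (hence $G$-stable) boundary divisor or is contained entirely in $G/H$. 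The closure $\overline{E}$ of a color $E$ cannot be of the first type, and the second type does not contain $y$, contradicting the assumption that $\overline{E}$ contains $y$. Hence $\mathfrak{F}(X)$ has no colors.

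The main obstacle is the forward direction, where one needs the precise form of the local structure theorem for toroidal spherical varieties giving the product decomposition $P_u\times Z$ with $Z$ toric; once that is in hand, both directions are essentially a matching of combinatorial data on each side. The converse direction is simpler in principle but requires care to ensure the étale-local toric coordinates are compatible with the $B$-action rather than only an abstract torus action on the chart.
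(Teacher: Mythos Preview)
The paper does not give its own proof of this proposition; it is stated as a citation of \cite[Proposition 7.13]{ColesUlirsch}. So there is nothing in this paper to compare against, and I will evaluate your argument on its own merits.

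Your forward direction is the standard one and is correct: the local structure theorem in the color-free case (this is essentially \cite[Proposition 3.3.2]{Perrin}, which the paper cites a few lines later) gives precisely the product $P_u\times Z$ with $Z$ an affine toric $T'$-variety, and from there the toroidal property is immediate.

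Your converse, however, has a real gap. The crucial step is the assertion that in an étale-local toric chart around $y\in Y$, ``every $B$-stable prime divisor of $X$ meeting the chart is either pulled back from a $T$-stable (hence $G$-stable) boundary divisor or is contained entirely in $G/H$.'' This dichotomy does not follow from the toroidal hypothesis. The toroidal structure identifies the \emph{boundary} divisors of $X$ with the torus-invariant divisors of the local toric model, but it says nothing about divisors that are not contained in the boundary; the étale chart carries no compatibility with the $B$-action. A closure $\overline{E}$ of a color is exactly a divisor that meets $G/H$ (so is not a boundary divisor) yet may also meet $X\setminus G/H$; the toroidal hypothesis alone does not forbid this, so no contradiction is produced. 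You flag this concern yourself in your last paragraph, but as written the argument does not close.

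A route that does work is to apply the local structure theorem in its general form (valid for any simple spherical embedding, with or without colors) and analyze the slice $S$ directly when $\mathcal F_Y\neq\emptyset$. One then checks on the explicit local model that $(G/H\cap X_0)\hookrightarrow X_0$ fails to be étale-locally toric; for instance, in the $\SL_2$ example of Section~\ref{Section_Examples}, the color forces the boundary near the closed orbit to have codimension $2$ rather than $1$. The obstruction is visible in the $G$-equivariant local model, not by tracking $B$-stable divisors through an arbitrary étale chart with no group compatibility.
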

\noindent
For the remainder of this paper we will almost entirely work with toroidal $G/H$-embeddings. Note that a colored fan with no colors is just a fan supported in $\calV$, and thus by Proposition \ref{thm_toristor} and Theorem \ref{thm_coloredfans} we have that toroidal spherical embeddings correspond to fans that are supported in $\calV$. Furthermore by \cite[Proposition 3.3.2]{Perrin} the $G$-orbits of a toroidal embedding are exactly the toroidal strata of the toroidal embedding. In the following section we will see that the spherical tropicalization map agrees with the tropicalization map for toroidal embeddings and so the fan in $\calV$ is exactly the cone complex of the toroidal embedding.

\begin{remark}
    Given two toroidal $G/H$-embeddings $X$ and $X'$ with associated fans $\Delta$ and $\Delta'$ if for any cone $\sigma\in \Delta$ the identity map on $N_{\R}(G/H)$ maps $\sigma$ into some other cone $\sigma'\in \Delta'$ we have that this map defines a $G$-equivariant morphism $X\rightarrow X'$ such that the following diagram commutes:
\[
\begin{tikzcd}
G/H \arrow[d, hookrightarrow] \arrow[dr, hookrightarrow]  & \\
X \arrow[r] &  X'.
 \end{tikzcd}
\]
Conversely, any such morphism of $X\rightarrow X'$ induces the identity on $N_{\R}(G/H)$ and will map cones of $\Delta$ to cones of $\Delta'$.
\end{remark}

\section{Spherical tropicalization}\label{Section_SphericalTropicalization}

Tevlev and Vogiannou introduced a tropicalization map for spherical varieties \cite{TVSpherTrop}. The map is given as follows. Let $\gamma \in G/H(K)$ where $K$ is an algebraically closed and non-trivially valued field extension of $k$. Then define $\trop_G\colon G/H(K) \rightarrow \cV$ by $\gamma \mapsto \val_{\gamma}$ where
\[
\val_{\gamma}(f) = \val_K(g\cdot f(\gamma) ) \textrm{ for $g\in U_f\subseteq G(k)$}.
\]
Here $U_f$ is a Zariski dense open of $G(k)$ where the right hand side is constant for $g\in U_f$. The image of this map is dense in $\cV$ because $K$ is algebraically closed. This map agrees with the Kajiwara-Payne tropicalization map when $G$ is a torus. Nash extends this map to any $G/H$-embedding, in the same manner that the tropicalization map is extended from tori to toric varieties \cite{NashGlobal}. If $X$ is a $G/H$-embedding then $X$ is the disjoint union of some finite collection of $G$-orbits $Y_1,\ldots Y_m$, each of which is spherical \cite[Corollary 2.2]{Knop}, and for each there is an associated valuation cone $\calV(Y_i)$ and tropicalization map $\trop_G\colon Y_i(K)\rightarrow \calV(Y_i)$. We will now describe how to glue these maps together. We start with the case when $X$ is simple; let $(\sigma,\calF)$ be the corresponding colored cone. 

Consider the space $\Hom(\sigma^\vee,(-\infty,\infty])$,
here $\sigma^{\vee} : =\{m\in M_{\R} \mid u(m)\geq 0 \textrm{ for any $u\in \sigma^{\vee}$}\}$ and $\Hom$ denotes monoid homomorphisms that preserve scaling by elements of $\R$. 
 Let $u\in \Hom(\sigma^{\vee},(-\infty,\infty])$, the set where $u$ is not equal to $\infty$ is equal to $\tau^{\perp}\cap \sigma^{\vee}$ for some $\tau$ a face of $\sigma$. So $u$ defines an element of $\Hom(M_{\R}(\tau)\cap \sigma^{\vee},(-\infty, \infty))\cong N_{\R}(\tau)$. This gives a bijection:
\[
\Hom(\sigma^{\vee},(-\infty,\infty])\rightarrow \bigsqcup_{\textrm{$\tau$ a face of $\sigma$}}N_{\R}(\tau)
\]
This defines a partial compactification of $N_{\R}(G/H)$ as $N_{\R}(G/H)=N_{\R}(0)$.
By Theorem \ref{thm_FanOfOrbit} we have that $N_{\R}(\tau)$ is equal to $N_{\R}(Y_i)$ where $Y_i$ is the $G$-orbit corresponding to $\tau$, and so $\calV(Y_i)\subseteq N_{\R}(\tau)$. So then $\Trop_G(X)$ is defined to to be the subspace of $\Hom(\sigma^{\vee},(-\infty,\infty])$ defined by the disjoint union of the valuation cones $\calV(Y_1),\ldots \calV(Y_m)$. When $X$ is not necessarily simple we observe that we can cover $X$ by simple varieties, and because the interseciton of two simple spherical subvarieties will be simple, we glue the tropicalizations along the tropicalization of the intersection. The tropicalization maps will glue to form a continuous map:
\[
\trop_G\colon X(K) \rightarrow  \Trop_G(X)
\]
which we call the \textbf{spherical tropicalization map}.

This tropicalization map extends to a map from the Berkovich analytification of $X$, which we denote by $X^{\an}$, \cite{ColesAnalytic}. The spherical tropicalization map admits a section $J_G\colon \Trop_G(X)\hookrightarrow X^{\an}$ and the composition $J_G\circ \trop_G$ defines a retraction map $\bfp_G\colon X^{\an}\rightarrow X^{\an}$, in fact there is a strong deformation retraction from $X^{\an}$ onto $J_G(\trop_G(X))$ \cite[Theorem A]{ColesAnalytic}. There is also a Gröbner theory for spherical varieties as introduced by Kaveh and Manon \cite{KMGrob}, this theory agrees with the above approach to tropicalization, i.e. there is a `fundamental theorem of spherical tropical geometry' \cite[Theorem 4]{KMSurvey}. We will not need Gröbner theoretic tools so we will not introduce this here.

 Tevelev introduces `tropical compactifications' of subvariety of tori in \cite{TevelevCompactifications}. These compactifications of subvarieties arise from toric varieties containing $T$ and have certain desirable properties for intersection theoretic computations. Tropical compactifications are an important ingredient in balancing for tropicalizations in the toric case \cite{TropElimTheory}. In \cite{TVSpherTrop} Tevelev and Vogiannou introduce tropical compactifications for subvarieties of a spherical homogeneous space and prove that such compactifications exist.
Let $Z\subseteq G/H$ be a subvariety, and let $X$ be a $G/H$-embedding. Let $\overline{Z}$ be the closure of $Z$ in $X$. Then we say that $\overline{Z}$ is a \textbf{tropical compactification} of $Z$ if $\overline{Z}$ is proper and the mutliplication map $m\colon G \times \overline{Z}\rightarrow X$ is faithfully flat.
\begin{theorem}\cite[Theorem 1]{TVSpherTrop}\label{thm_tropcompact}
    Let $Z\subseteq G/H$ be a subvariety. Then a tropical compactification of $Z$ in a spherical $G/H$-embedding exists. If $X$ is the associated spherical variety of a tropical compactification, and $\Delta$ is the fan associated to $X$, then $\Trop_G(Z)$ is equal to the support of $\fF(X)$.
\end{theorem}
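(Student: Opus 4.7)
My plan is to prove existence first by combining a complete spherical embedding with a flattening refinement, and then to deduce the support equality from faithful flatness.

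First, I would choose any complete $G/H$-embedding $X_0$: such an $X_0$ exists by Luna-Vust theory (Theorem \ref{thm_coloredfans}) by taking a complete colored fan, so that the closure $\overline{Z}_0$ of $Z$ in $X_0$ is automatically proper. Then I would refine $\fF(X_0)$ until the multiplication map $m \colon G \times \overline{Z} \to X$ becomes faithfully flat, following the template Tevelev uses in the toric case. By generic flatness, $m_0 \colon G \times \overline{Z}_0 \to X_0$ is flat over a dense open subset of $X_0$, and the non-flat locus is a $G$-stable closed subscheme. Such subschemes correspond combinatorially to unions of orbit closures, hence to subfans of $\fF(X_0)$. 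Subdividing the fan along this locus and iterating (using Noetherian descent on the non-flat locus) yields the required $X$ and fan $\Delta$.

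For the support equality write $|\Delta|$ for the support of the fan associated to $X$. One inclusion $\Trop_G(Z) \subseteq |\Delta|$ follows from the construction: any $\gamma \in Z(K)$ specializes in $\overline{Z} \subseteq X$ to a point in some $G$-orbit $Y_\sigma$, and the orbit-cone correspondence together with the gluing description of $\trop_G$ from Section \ref{Section_SphericalTropicalization} place $\trop_G(\gamma)$ in the cone $\sigma \in \fF(X)$. For the reverse inclusion, faithful flatness of $m$ gives surjectivity, so every $G$-orbit $Y_\sigma$ of $X$ meets $\overline{Z}$. Choosing a point in $Y_\sigma \cap \overline{Z}$ and lifting it to a $K$-point of $Z$ that specializes to it — using that $Z$ is dense in $\overline{Z}$ — produces $\gamma \in Z(K)$ with $\trop_G(\gamma)$ in the relative interior of $\sigma$, and running this over all faces shows $\sigma \subseteq \Trop_G(Z)$.

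The main obstacle I anticipate is the flattening refinement: one must ensure that the non-flat locus can be resolved by finitely many fan subdivisions rather than by introducing colors or non-fan colored data, and that the refinement stays in the toroidal (uncolored) range so that the support statement about $\fF(X)$ is meaningful. In the toric setting this is handled via toric Hilbert schemes; in the spherical setting the analogous argument should rely on the $G$-equivariant local structure of spherical varieties near orbits (via the local structure theorem), which guarantees that $G$-stable closed subsets correspond to subfans and so can be eliminated by a finite sequence of refinements of $\fF(X_0)$.
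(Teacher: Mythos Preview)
The paper does not prove this theorem: it is quoted verbatim as \cite[Theorem 1]{TVSpherTrop} and used as a black box throughout (see the opening of Section~\ref{Section_balanacingTWO} and the discussion after Theorem~\ref{thm_tropcompact}). There is therefore no ``paper's own proof'' to compare against.

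Your outline does match the strategy of Tevelev's original toric argument, and Tevelev--Vogiannou's proof is indeed an adaptation of that template. Two comments on your sketch. First, you start from an arbitrary complete $G/H$-embedding $X_0$ but then want to stay ``in the toroidal (uncolored) range''; these are in tension. It is cleaner to begin with a complete \emph{toroidal} embedding (which exists: take any complete colored fan, intersect each cone with $\calV$, and drop the colors, as the paper itself does just before Proposition~\ref{Proposition_PropernessOfIntersectionsInSpherTrop}). Then every $G$-equivariant birational modification you need is again a toroidal modification, i.e.\ a refinement of the fan in $\calV$, and the worry about ``introducing colors'' disappears. Second, the flattening step you flag as the obstacle is genuinely the crux: ``subdividing along the non-flat locus and iterating'' is not a proof, because the non-flat locus is $G$-stable but need not be a union of orbit closures cut out by a subfan in any way that a single subdivision resolves. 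The actual argument in \cite{TVSpherTrop} invokes Raynaud--Gruson flattening by blow-up and then dominates that blow-up by a toroidal ($G$-equivariant) modification; Noetherian induction alone, as you phrase it, does not supply the flattening. Your support-equality argument via surjectivity of $m$ and specialization is essentially correct.
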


\noindent
See Section \ref{Section_Examples} for examples of spherical tropicalization.

\section{Tropicalization of toroidal spherical embeddings}\label{Section_toroidalembeddings}

Let $X$ be a proper toroidal $G/H$-embedding. Then there are a priori, two tropicalization maps from $X$: there is the spherical tropicalization map $\trop_G\colon X^{\an}\rightarrow \Trop_G(X)$ and the tropicalization map induced by the toroidal structure $\trop_{\log}\colon X^{\an}\rightarrow \overline{\Sigma}_X$ \cite{UlirschLoTrop}. The toroidal tropicalization map admits a section $J_{\log}\colon \Sigmabar_{X}\rightarrow X^{\an} $ and there is a retraction $\bfp_X\colon X^{\an}\rightarrow X^{\an}$ given by $J_{\log}\circ \trop_{\log}$. These two tropicalization maps agree.
\begin{theorem}\cite[Theorem C]{ColesUlirsch}\label{thm_sphertroplogtrop}
    The image of the sections $J_{G}$ and $J_{\log}$ are equal and the retraction maps $\bfp_G$ and $\bfp_X$ are equal.
\end{theorem}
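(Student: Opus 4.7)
My plan is to exploit the local structure of toroidal spherical embeddings to identify the two sections pointwise, then deduce equality of the retractions.

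First I would reduce to the affine simple case. Since $X$ is toroidal the colored fan $\fF(X)$ contains no colors and its underlying set of cones is an honest fan $\Delta$ supported in $\cV$. Cover $X$ by the simple affine toroidal opens $X_\sigma$ corresponding to $\sigma \in \Delta$. Both tropicalization maps restrict well to $G$-invariant opens: the spherical tropicalization uses the gluing described in Section \ref{Section_SphericalTropicalization}, and toroidal tropicalization is local in the analytic topology on such opens. Moreover, the sections $J_G$ and $J_{\log}$ commute with the restriction of $X^{\an}$ to $(X_\sigma)^{\an}$. So it suffices to check the statement for each simple $X_\sigma$.

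Next I would identify the two cone complexes. For a simple toroidal $X_\sigma$, Theorem \ref{thm_FanOfOrbit} together with the description of $\Trop_G(X_\sigma)$ as $\Hom(\sigma^\vee, (-\infty,\infty])$ in Section \ref{Section_SphericalTropicalization} stratifies $\Trop_G(X_\sigma)$ by faces of $\sigma$, each face corresponding to a $G$-orbit. On the toroidal side, by \cite[Proposition 3.3.2]{Perrin} the toroidal strata of $X_\sigma$ are precisely the $G$-orbits, so the extended cone $\overline{\Sigma}_{X_\sigma} = \overline{\sigma}$ carries the same stratification, and the identification of cone complexes is tautological.

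The key technical step is to show that $J_G$ and $J_{\log}$ assign the same seminorm on $\Gamma(X_\sigma, \cO_{X_\sigma})$ to each point $v \in \overline{\sigma}$. For $v$ in the relative interior of a face $\tau \prec \sigma$, the point $J_G(v)$ represents the canonical $G$-invariant valuation on $k(Y_\tau)$ with image $v$ under $\varrho$, lifted to a seminorm on the local ring of $X_\sigma$ along $Y_\tau$. The point $J_{\log}(v)$ represents the monomial seminorm attached to $v$ on a toroidal chart. To verify these coincide I would invoke the Luna–Brion local structure theorem for toroidal spherical embeddings: there is a $P$-stable affine open $U \subseteq X_\sigma$ meeting the closed orbit, with a $P^u$-equivariant isomorphism $U \cong P^u \times Z$ where $Z$ is an affine toric variety with cone $\sigma$ for the quotient torus $L/L \cap H$. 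On this model, both seminorms are determined by their restriction to $B$-semi-invariant functions, where they are equal: for $J_G$ this is the defining relation $\val(f) = \langle v, \chi_f\rangle$, and for $J_{\log}$ the $B$-semi-invariants of the appropriate weights are exactly the monomials on $Z$ pulled back via the toroidal chart.

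The main obstacle is this compatibility, which requires carefully matching the $B$-semi-invariant filtration on $\cO(X_\sigma)$ (underlying the Luna–Vust picture) with the toroidal monomial filtration coming from the chart $Z$. Once this pointwise identification is established, $J_G(\Trop_G(X)) = J_{\log}(\overline{\Sigma}_X)$ as subsets of $X^{\an}$ is immediate, and the equality $\bfp_G = \bfp_X$ follows because both retractions act as the identity on their common image and both are continuous retractions of $X^{\an}$ onto this skeletal subset constructed in the same way from the cone-complex structure — so they coincide on a dense set (e.g.\ on $(G/H)^{\an}$, where both are determined by the $G$-invariant/monomial reduction) and hence everywhere by continuity.
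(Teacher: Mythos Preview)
The paper does not contain a proof of this statement: Theorem~\ref{thm_sphertroplogtrop} is quoted from \cite{ColesUlirsch} (as Theorem~C there) and used as a black box. So there is no proof in the present paper to compare your proposal against.

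That said, a comment on your argument. Your reduction to simple toroidal opens and your use of the local structure theorem to identify the sections $J_G$ and $J_{\log}$ pointwise is the natural strategy, and is in the spirit of what is done in \cite{ColesUlirsch}. The weak point is the final paragraph. Knowing that $J_G$ and $J_{\log}$ have the same image in $X^{\an}$ does \emph{not} imply that the retractions $\bfp_G = J_G\circ\trop_G$ and $\bfp_X = J_{\log}\circ\trop_{\log}$ coincide: two distinct continuous retractions onto the same subspace are perfectly possible. What is missing is a direct verification that $\trop_G$ and $\trop_{\log}$ agree (after identifying their targets), i.e.\ that for $x\in (G/H)^{\an}$ the $G$-invariant valuation $\val_x$ produced by the Tevelev--Vogiannou recipe coincides, on $B$-semi-invariants, with the monomial valuation attached to $x$ by the toroidal chart $U\cong P^u\times Z$. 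Your phrase ``both are determined by the $G$-invariant/monomial reduction'' gestures at this but is not an argument; one has to check, for instance, that the generic-$g$ averaging in the definition of $\val_x$ really computes the value on the $Z$-factor of the chart. Once that is established, equality of the retractions follows directly rather than by a density-plus-continuity appeal.
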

In particular we have a canonical identification of the tropicalizations $J_G^{-1}\circ J_{\log}\colon \overline{\Sigma}_X\rightarrow \Trop_G(X)$ such that the following diagram commutes:
\[
\begin{tikzcd}
X^{\an} \arrow[d, "\trop_{\log}"] \arrow[drr, "\trop_G"]  & \\
\overline{\Sigma}_X \arrow[rr, "\sim"] \arrow[u, bend left, "J_{\log}"] & &  \Trop_G(X) \arrow[ull, swap,  bend right, "J_G"]. \\
 \end{tikzcd}
\]
\begin{remark}
    The assumption that $X$ is proper is not especially serious for our purposes: given a toroidal compactification $X$ of $G/H$ there is some $G$-equivariant compactification of $X$ by \cite[Theorem C]{Sumihiro} and this compactification can be modified to be toroidal by taking each colored cone $(\sigma,\calF)$ in the associated colored fan, intersecting $\sigma$ with $\calV$, and removing $\calF$. The resulting fan will define a proper toroidal $G/H$-embedding which contains $X$ as an open subspace. Then we can restrict the tropicalization map to a subspace of this compactification to apply Theorem \ref{thm_sphertroplogtrop} to $X$.
    We introduced the condition that $X$ is proper to avoid having to introduce some definitions from Berkovich geometry that we would not otherwise need in this paper. Readers familiar with Thuillier's $\beth$-functor should note that $\bfp_X$ is only defined on $X^{\beth}$ and so when $X$ is not proper we must restrict $\bfp_G$ to $X^{\beth}\subseteq X^{\an}$ for the theorem to hold. See \cite{UlirschLoTrop} or \cite{ThuillierToroidal} for more details on the tropicalization of toroidal embeddings.
\end{remark} 
\noindent
Recall from Section \ref{Section_TheLunaVustTheory} that toroidal $G/H$-embeddings are classified by fans $\fF(X)$ contained in the valuation cone $\calV$. In particular, the $G$-invariant boundary strata are in order-reversing bijection with cones in the fan. We have that $\Sigmabar_X$ is the canonical compactification of $\Sigma_X$, the cone complex of the toroidal embedding $G/H\hookrightarrow X$. It follows from the above Theorem that the cone complex $\Sigma_X$ is equal to $\fF(X)$. In fact, the toroidal strata are equal to the $G$-invariant boundary strata. This all follows from structure theorem for toroidal spherical varieties \cite[Proposition 3.3.2]{Perrin}.

We also have that tropical compactifications of $G/H$ are tropical compactifications in the sense of Ulirsch's tropical compactifications in log-regular varieties \cite{LogTropComp}. 
\begin{proposition}\label{Proposition_PropernessOfIntersectionsInSpherTrop}
    Let $Z$ be a subvariety of $G/H$ and let $X$ be a toroidal $G/H$-embedding such that the closure of $Z$ in $X$, denoted $\overline{Z}$ is a tropical compactification. Then $\overline{Z}$ intersects the boundary strata of $X$ in the expected dimension. To be precise, $\overline{Z}$ intersects the stratum $V$ if and only if the cone associated to that stratum is contained in $\Trop_G(Z)$ and the dimension of the intersection is given by:
    \[
    \dim(\overline{Z}\cap V)=\dim (Z)-\codim(V).
    \]
\end{proposition}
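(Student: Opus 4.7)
The plan is to mirror Tevelev's original argument for subvarieties of algebraic tori, leveraging the faithful flatness of the multiplication map $m\colon G\times \overline{Z}\to X$ together with the fact that each boundary stratum of the toroidal spherical embedding $X$ is $G$-stable.

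First I would observe that since $X$ is toroidal, its boundary strata coincide with the $G$-orbits (by the structure theorem for toroidal spherical varieties invoked in Section~\ref{Section_toroidalembeddings}). In particular every stratum $V$ is $G$-stable, so a point $(g,z)\in G\times \overline{Z}$ satisfies $g\cdot z\in V$ if and only if $z = g^{-1}(g\cdot z)\in V$. This yields the product decomposition
\[
m^{-1}(V)\;=\;G\times (V\cap \overline{Z}).
\]

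Next I would establish the equivalence statement. Since $m$ is faithfully flat it is surjective, so $m^{-1}(V)$ is nonempty whenever $V$ is nonempty, forcing $V\cap \overline{Z}\neq\emptyset$ for every stratum of $X$. By Theorem~\ref{thm_tropcompact}, $\Trop_G(Z)$ equals the support of $\fF(X)$, and the orbit-cone correspondence identifies boundary strata of $X$ with cones of $\fF(X)$; hence every stratum corresponds to a cone contained in $\Trop_G(Z)$, giving both directions of the \emph{if and only if}.

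Finally, for the dimension formula I would appeal to the standard equidimensionality result for flat morphisms: since $m$ is flat of relative dimension $r := \dim G + \dim \overline{Z}-\dim X$, every irreducible component of $m^{-1}(V)$ has dimension $\dim V + r$. Combining this with the product decomposition above and using $\dim \overline{Z} = \dim Z$ yields
\[
\dim(V\cap \overline{Z})\;=\;\dim V + \dim Z - \dim X\;=\;\dim Z - \codim_X(V).
\]
The argument is essentially routine once faithful flatness is in hand. The only mild care required is applying the flat-morphism dimension formula componentwise on $V$, but since orbits of a spherical variety are smooth and irreducible this poses no genuine obstacle.
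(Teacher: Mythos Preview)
Your argument is correct, and it is genuinely different from the paper's proof. The paper does not argue directly: it simply observes that for a toroidal $G/H$-embedding the $G$-invariant boundary strata coincide with the toroidal strata and that the spherical and toroidal tropicalization maps agree (via \cite[Theorem C]{ColesUlirsch}), and then invokes \cite[Theorem 1.2]{LogTropComp} as a black box. You instead reproduce Tevelev's original torus argument in the spherical setting, using the $G$-stability of the strata to get $m^{-1}(V)=G\times(V\cap\overline{Z})$ and then the equidimensionality of flat morphisms to read off the dimension.

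What each buys: the paper's route is one sentence long but imports Ulirsch's general machinery for log-regular varieties; your route is self-contained and makes transparent exactly where the spherical hypothesis (namely, that the boundary strata of a toroidal $G/H$-embedding are $G$-orbits) is actually used. Your handling of the ``if and only if'' is also fine: in the tropical-compactification setup every cone of $\fF(X)$ lies in $\Trop_G(Z)$ by Theorem~\ref{thm_tropcompact}, so both sides of the biconditional hold automatically, and surjectivity of $m$ confirms the nontrivial direction.
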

\begin{proof}
    This follows from the fact that for a toroidal $G/H$-embedding, the $G$-invariant boundary strata correspond to the toroidal boundary strata, and the spherical tropicalization map and torpicalization map for toroidal embeddings agree \cite[Theorem C]{ColesUlirsch}, which then means that the results of \cite[Theorem 1.2]{LogTropComp} apply.
\end{proof}

\section{Balancing for spherical tropical curves}\label{Section_balanacingTWO}

Let $C\subseteq G/H$ be an algebraic curve over $k$. As a result of \cite[Theorem 1]{TVSpherTrop} we have that $\Trop_G(C)$ is the support of a fan in $\cV$, and because the spherical tropicalization map agrees with the tropicalization map for toroidal embeddings we have that this fan is dimension at most 1 \cite[Theorem C]{ColesUlirsch}. Because the tropicalization has dimension at most 1 we have that $\Trop_G(C)$ carries a unique fan structure, and this fan defines a tropical compactification of $C$ \cite[Theorem 1.2]{TVSpherTrop}. We will now assign weights to this fan and give the balancing condition. We proceed in an analogous fashion to the definition of weights associated to tropicalization of subvarieties of toroidal embeddings in \cite[Subsection 4.2]{GrossIntersection}. For background on intersection theory we refer the reader to \cite{FultonIntersectionBook} (chapters 6, 7, and 8 in particular), and to \cite{FMSS} for the intersection theory of spherical varieties in particular.

Let $\Sigma$ be a simplicial fan in $\cV$ that covers $\cV$ and contains $\Trop_G(C)$ as a subfan. Let $X$ be the associated toroidal $G/H$-embedding. We have that $X$ is smooth \cite[Corollary 3.3.4]{Perrin} and proper \cite[Theorem 4.2]{Knop}. Such a fan exists because \cite[Theorem C]{Sumihiro} guarantees there is a $G$-equivariant compactification of the toroidal embedding whose associated fan is $\Trop_G(C)$. The associated colored fan of this compactification will contain $\cV$ and contain $\Trop_G(C)$ as a subfan. We can then take the cones in the fan, remove the colors, and intersect the cones with $\calV$ to obtain a fan that covers $\calV$ and contains $\Sigma$, and this fan admits a simplicial refinement if necessary.

\begin{definition}
    Let $\sigma$ be a 1-dimensional cone in $\Trop_G(C)$. We define the \textbf{weight} of $\sigma$, which we denote $m_{\sigma}$, to be:
    \[
    \deg\left([\overline{C}]\cdot [D_{\sigma}]\right).
    \]
    Here $\overline{C}$ is the closure of $C$ in $X$ and $D_{\sigma}$ is the codimension 1 strata associated to $\sigma$. When we write ``$\deg$'' we mean the degree map from the $0$th Chow homology group $A_0(X)$ and the product $[\overline{C}]\cdot [D_{\sigma}]$ is the intersection product.
\end{definition}

\begin{lemma}\label{Lemma_WellDefinednessOfWeight}
    The weights $m_{\sigma}$ are independent of the choice of $\Sigma$.
\end{lemma}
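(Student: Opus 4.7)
The plan is to reduce to the case of a common refinement and then invoke the projection formula together with Proposition \ref{Proposition_PropernessOfIntersectionsInSpherTrop}. Given two simplicial fans $\Sigma$ and $\Sigma'$ in $\calV$, both covering $\calV$ and both containing $\Trop_G(C)$ as a subfan, I would first construct a common simplicial refinement $\Sigma''$ that still contains $\Trop_G(C)$ as a subfan. This is possible because $\Trop_G(C)$ is at most $1$-dimensional, so its rays are already rays of both $\Sigma$ and $\Sigma'$, and the standard common refinement (followed by a simplicial subdivision, if necessary, that leaves the rays of $\Trop_G(C)$ untouched) works. By symmetry it suffices to show that the weights computed with respect to $\Sigma$ agree with those computed with respect to $\Sigma''$.

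Let $X$ and $X''$ be the toroidal $G/H$-embeddings associated to $\Sigma$ and $\Sigma''$ respectively, and let $\pi\colon X''\to X$ be the induced $G$-equivariant proper birational morphism (which exists by the remark at the end of Section \ref{Section_TheLunaVustTheory}, since the identity map on $N_{\bR}(G/H)$ maps every cone of $\Sigma''$ into a cone of $\Sigma$). Let $\overline{C}$ and $\overline{C}''$ denote the closures of $C$ in $X$ and $X''$. Since $\pi$ is an isomorphism on $G/H$ and $\overline{C}''$ is the closure in $X''$ of $C\subseteq G/H$, the map $\pi$ restricts to a birational proper morphism $\overline{C}''\to \overline{C}$, so $\pi_*[\overline{C}'']=[\overline{C}]$.

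For the divisor side, let $\sigma$ be a ray of $\Trop_G(C)$; it is a ray of both $\Sigma$ and $\Sigma''$, with the same primitive generator $v_\sigma$. Write $D_\sigma\subseteq X$ and $D''_\sigma\subseteq X''$ for the corresponding boundary divisors. By the standard formula for pullback of toric/toroidal divisors under a refinement,
\[
\pi^*[D_\sigma]=\sum_{\rho} a_\rho [D''_\rho],
\]
summed over rays $\rho$ of $\Sigma''$, where $a_\rho$ is the value at the primitive generator $v_\rho$ of the piecewise-linear function on $|\Sigma|$ which is $1$ at $v_\sigma$ and $0$ at every other ray of $\Sigma$. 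In particular $a_\sigma=1$, while $a_\rho=0$ for every other ray $\rho$ of $\Sigma''$ that is also a ray of $\Sigma$. Moreover, by Theorem \ref{thm_tropcompact} the closure $\overline{C}''$ is a tropical compactification, so by Proposition \ref{Proposition_PropernessOfIntersectionsInSpherTrop} $\overline{C}''$ meets only those boundary divisors $D''_\rho$ for which $\rho\subseteq \Trop_G(C)$; but the rays of $\Trop_G(C)$ are exactly the rays of $\Sigma$ (and of $\Sigma''$) that lie in $\Trop_G(C)$. Hence every ray $\rho$ with $a_\rho\neq 0$ that also contributes to the intersection with $\overline{C}''$ must equal $\sigma$, and so $[\overline{C}'']\cdot \pi^*[D_\sigma]=[\overline{C}'']\cdot [D''_\sigma]$ in $A_0(X'')$.

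The projection formula now gives
\[
\deg\!\left([\overline{C}'']\cdot [D''_\sigma]\right)=\deg\!\left([\overline{C}'']\cdot \pi^*[D_\sigma]\right)=\deg\!\left(\pi_*[\overline{C}'']\cdot [D_\sigma]\right)=\deg\!\left([\overline{C}]\cdot [D_\sigma]\right),
\]
which is the desired equality of weights. The main technical point I expect to check carefully is the claim that $\overline{C}''$ avoids the extra boundary divisors $D''_\rho$ introduced by the refinement; this is precisely where the tropical compactification hypothesis (Theorem \ref{thm_tropcompact}) and Proposition \ref{Proposition_PropernessOfIntersectionsInSpherTrop} do the essential work, since on a general toroidal embedding containing $\overline{C}$ one could not control intersections of $\overline{C}''$ with divisors whose rays are not in $\Trop_G(C)$.
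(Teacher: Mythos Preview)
Your proof is correct and follows the same overall strategy as the paper: reduce to comparing a fan with a simplicial refinement, then apply the projection formula together with the proper-intersection property of tropical compactifications (Proposition~\ref{Proposition_PropernessOfIntersectionsInSpherTrop}). The one difference is in how the projection formula is deployed. The paper pulls back the \emph{curve} class, establishing $f^*[\overline{C}]=[\overline{C}']$ via the graph-morphism definition of $f^*$ and the proper-intersection property, and then pushes forward the divisor using $f_*[D'_\sigma]=[D_\sigma]$. You instead push forward the curve, $\pi_*[\overline{C}'']=[\overline{C}]$, and pull back the \emph{divisor}, computing $\pi^*[D_\sigma]$ via the associated piecewise-linear function and then invoking Proposition~\ref{Proposition_PropernessOfIntersectionsInSpherTrop} to kill the contributions of the new rays. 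Both routes hinge on the same proposition; yours has the mild advantage that pulling back a Cartier divisor along a toroidal refinement is a concrete, well-known formula, whereas the paper's $f^*[\overline{C}]=[\overline{C}']$ requires a short separate argument. One small point of care: strictly speaking $\overline{C}''$ is a tropical compactification in the open subembedding $X_0\subseteq X''$ with fan $\Trop_G(C)$, not in $X''$ itself (the multiplication map to $X''$ is not surjective); but since $\overline{C}''$ is already proper in $X_0$, it lies entirely in $X_0$ and hence avoids all boundary divisors $D''_\rho$ with $\rho\not\subseteq\Trop_G(C)$, which is all you need.
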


\begin{proof}
    Let $\Sigma'$ be a simplicial refinement of $\Sigma$, and let $X'$ be the respective toroidal $G/H$-embedding.  Then it suffices to show that for each 1-dimensional cone $\sigma$ in $\Trop_G(C)$ that we have:
    \[
     \deg\left([\overline{C}']\cdot [D'_{\sigma}]\right)= \deg\left([\overline{C}]\cdot [D_{\sigma}]\right).
    \]
    Here, $D_{\sigma}'$ is the codimension 1 strata associated to $\sigma$ in $X'$ and $\overline{C}'$ is the closure of $C$ in $X'$. This is sufficient to show because any two fans admit a common simplicial refinement.

    The map of fans $\Sigma'\rightarrow \Sigma$ induced by the identity on $\cV$ induces a proper surjective morphism $f\colon X'\rightarrow X$. In fact this is a morphism of toroidal embeddings. Let $f^*$ and $f_*$ be the pullback and pushforward morphisms on Chow homology, respectively. Then we have the following:
    \[
\deg\left([\overline{C}']\cdot [D'_{\sigma}]\right)=\deg\left(f_*\left([\overline{C}']\cdot [D'_{\sigma}]\right)\right)=\deg\left(f_*\left(f^*[\overline{C}]\cdot [D'_{\sigma}]\right)\right)=\deg\left([\overline{C}]\cdot f_*[D'_{\sigma}]\right)=\deg\left([\overline{C}]\cdot [D_{\sigma}]\right).
    \]
    The first equality follows from the definition of the pushforward, the third equality is the projection formula, and the fourth equality holds because $f$ maps the boundary divisor $D_{\sigma}'$ to $D_{\sigma}$ and is a birational map between the two. The second equality holds because by the definition of $f^*$ we have:
    \[
f^*[\overline{C}']=\gamma_{f}^*(X'\times \overline{C}).
    \]
    Here $\gamma_f$ is the graph morphism from $X'\rightarrow X\times X'$. We have that $\overline{C}$ intersects the strata of $X$ properly by Proposition \ref{Proposition_PropernessOfIntersectionsInSpherTrop} and the morphism $f$ is $G$-equivariant, in particular it maps $C$ isomorphically onto itself because $C$ is contained in $G/H$, so $f^*[\overline{C}]=n[\overline{C}']$ for some $n$, applying the projection forumla to $f_*f^*[\overline{C}]=f_*(f^*[\overline{C}]\cdot[X'])$ shows that $n=1$.
\end{proof}

Let $E_1,\ldots E_r$ be the palette of $G/H$, i.e. the $B$-stable codimension 1 subvarieties.

\begin{definition}
   We define the \textbf{colored weight} associated to $E_j$, which we denote $m_{j}$, to be 
    \[
    \deg\left([\overline{C}]\cdot [\overline{E_j}]\right)
    \]
    where $\overline{C}$ is the closure of $C$ in $X$ and $\overline{E_j}$ is the closure of $E_j$ in $X$.
\end{definition}

\begin{lemma} \label{Lemma_WellDefinednessOfColoredWeights}
    The colored weights are independent of the choice of $X$.
\end{lemma}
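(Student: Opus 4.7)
The plan is to mimic the proof of Lemma \ref{Lemma_WellDefinednessOfWeight} almost verbatim, with the only new ingredient being an analysis of how a toroidal modification affects the closure of the $B$-stable divisor $E_j$. Specifically, given two smooth simplicial toroidal $G/H$-embeddings $X$ and $X'$ whose fans cover $\calV$ and contain $\Trop_G(C)$ as a subfan, I would first reduce to the case where the fan of $X'$ refines that of $X$ by passing to a common simplicial refinement (which exists for fans supported in $\calV$ and yields a toroidal $G/H$-embedding by Theorem \ref{thm_coloredfans}). The identity on $N_{\R}(G/H)$ then induces a proper birational $G$-equivariant morphism $f\colon X' \to X$ that restricts to the identity on $G/H$.

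Next, I would run the projection-formula computation used in Lemma \ref{Lemma_WellDefinednessOfWeight}:
\[
\deg\!\left([\overline{C}']\cdot [\overline{E_j}']\right) = \deg\!\left(f_*\!\left(f^*[\overline{C}]\cdot [\overline{E_j}']\right)\right) = \deg\!\left([\overline{C}]\cdot f_*[\overline{E_j}']\right),
\]
where $\overline{E_j}'$ denotes the closure of $E_j$ in $X'$. The equality $f^*[\overline{C}]=[\overline{C}']$ is exactly the one established in the proof of Lemma \ref{Lemma_WellDefinednessOfWeight}, using Proposition \ref{Proposition_PropernessOfIntersectionsInSpherTrop} together with the fact that $f$ is an isomorphism on $G/H$ and hence on $C$. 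So the whole argument reduces to proving $f_*[\overline{E_j}']=[\overline{E_j}]$.

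For this last step, I would observe that since $E_j\subseteq G/H$ and $f$ restricts to the identity on $G/H$, the map $f$ sends $E_j\subseteq X'$ isomorphically onto $E_j\subseteq X$. Taking closures, $f(\overline{E_j}')$ is a closed subvariety of $X$ containing $E_j$, hence contains $\overline{E_j}$, and conversely lies in $\overline{E_j}$ by continuity. Thus $f$ restricts to a proper, surjective, generically injective (hence birational) morphism $\overline{E_j}'\to \overline{E_j}$, which gives $f_*[\overline{E_j}']=[\overline{E_j}]$ as cycle classes on $X$.

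The main obstacle is really just bookkeeping: one has to check that $\overline{E_j}'$ is genuinely the strict transform of $\overline{E_j}$ under $f$, with no extra components appearing along the exceptional locus. This is clean because $E_j$ is a divisor in $G/H$ and $f$ is an isomorphism there, so $\overline{E_j}'$ is irreducible and maps birationally to $\overline{E_j}$; there is no subtlety of the sort that arises for cycles which may meet the boundary nontrivially. No properness-of-intersection hypothesis is needed for $\overline{C}\cap \overline{E_j}$ itself, because the intersection product on the smooth variety $X$ is defined via Chow cohomology and the projection formula holds regardless.
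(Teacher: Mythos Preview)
Your proposal is correct and follows essentially the same approach as the paper: reduce to a common simplicial refinement, apply the projection formula exactly as in Lemma \ref{Lemma_WellDefinednessOfWeight}, and conclude by noting that $f$ restricts to the identity on $G/H$ (the open $G$-orbit), hence maps $E_j$ isomorphically to itself so that $f_*[\overline{E_j}']=[\overline{E_j}]$. Your treatment of this last step is in fact slightly more carefully worded than the paper's, which simply asserts that $f$ ``maps the $G$-orbits isomorphically onto itself and thus $E_j$ isomorphically onto itself.''
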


\begin{proof}
    Let $X'$ be another $G/H$-embedding given by a simplicial refinement of the fan associated to $X$. Let $\overline{C}'$ and $\overline{E_j}'$ be the closures of $C$ and $E_j$ in $X'$, respectively. Let $f\colon X'\rightarrow X$ be the natural proper $G$-equivariant morphism, then we have the following (as in the proof of Lemma \ref{Lemma_WellDefinednessOfWeight}):
  \[
\deg\left([\overline{C}']\cdot [\overline{E}'_{j}]\right)=\deg\left(f_*\left([\overline{C}']\cdot [\overline{E}'_{j}]\right)\right)=\deg\left(f_*\left(f^*[\overline{C}]\cdot [\overline{E}'_{j}]\right)\right)=\deg\left([\overline{C}]\cdot f_*[\overline{E}'_{j}]\right)=\deg\left([\overline{C}]\cdot [\overline{E}_j]\right).
    \]
with the last inequality holding because $f$ maps the $G$-orbits isomorphically onto itself and thus $E_j$ isomorphically onto itself. Because any two fans admit a common simplicial refinement we have that the numbers $m_j$ are independent of the choice of $X$.
\end{proof}

\begin{remark}
    In \cite[Lemma 3.12]{TropElimTheory} the analogue of lemmas \ref{Lemma_WellDefinednessOfWeight} and \ref{Lemma_WellDefinednessOfColoredWeights} are shown for a toric variety by first showing that given a smooth and proper $X$ such that the closure of $C$ is a tropical compactification, we have that $\overline{C}$ is Cohen-Macaulay at the intersection of $\overline{C}$ with the boundary \cite[Theorem 2.12]{TropElimTheory}. This allows one to show the length of the scheme $\overline{C}\cap D_{\sigma}$ equals the intersection number. In the spherical case it is true that $\overline{C}$ is Cohen-Macaulay at an intersection with the toroidal boundary (even if $C$ is a subvariety of arbitrary dimension) because Cohen-Macaulayness is preserved under etale maps. But $\overline{C}$ could have arbitrarily bad singularities at an intersection point with a color; take, for example, any singular curve in $\bA^{n^2}$. The affine space $\bA^{n^2}$ is a $\GL_n\times \GL_n$ spherical variety and has a color given by the vanishing of $x_{nn}$. Translate the singularity of the curve onto the hyperplane given by the aforementioned color. This gives an arbitrarily bad singularity of a curve intersecting with a color.
\end{remark}

\begin{lemma}\label{lemma_BinvDivs}
    Let $X$ be a toroidal embedding of $G/H$. Let $D$ be a $B$-invariant codimension 1 subvariety of $X$. Then $D$ is either a $G$-invariant boundary divisor, i.e. some divisor $D_{\sigma}$ for $\sigma$ a 1-dimensional cone in the fan of $X$, or $D=\overline{E_j}$ for some $j$.
\end{lemma}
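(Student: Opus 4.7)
The plan is a two-case analysis based on whether $D$ meets the open $G$-orbit. Since $D$ is an irreducible codimension 1 subvariety of $X$ and $G/H$ is open and dense, either $D \cap G/H$ is a dense open subvariety of $D$, or $D$ is entirely contained in the boundary $X \setminus G/H$. These two possibilities will correspond to the two alternatives in the statement.

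In the first case, $D \cap G/H$ is a nonempty open $B$-stable subvariety of the irreducible $D$, hence irreducible of codimension 1 in $G/H$, and $B$-stable (being the intersection of two $B$-stable sets). By the very definition of the palette $\calD = \{E_1,\ldots,E_r\}$ as the collection of $B$-stable prime divisors of $G/H$, we must have $D \cap G/H = E_j$ for some $j$, and therefore $D = \overline{E_j}$ in $X$.

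In the second case, I will show that $D$ is automatically $G$-invariant, from which the conclusion follows by the orbit--cone correspondence. Consider $\overline{G \cdot D}$, the closure of the image of the multiplication morphism $G \times D \to X$. As $G$ and $D$ are irreducible, so is $G \times D$, and hence $\overline{G \cdot D}$ is irreducible; it is moreover $G$-invariant and closed, and contains $D$. Since $X \setminus G/H$ is $G$-invariant and closed and $D \subseteq X \setminus G/H$, we have
\[
\overline{G \cdot D} \subseteq X \setminus G/H \subsetneq X.
\]
Because $D$ has codimension 1, any irreducible closed subset of $X$ strictly larger than $D$ must equal $X$; since $\overline{G \cdot D}$ is strictly smaller than $X$, we conclude $\overline{G \cdot D} = D$, i.e. $D$ is $G$-invariant. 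Now the orbit--cone correspondence of Theorem \ref{thm_coloredfans}, applied to the toroidal embedding $X$ whose colored fan $\fF(X)$ contains no colors, places codimension 1 $G$-orbit closures in bijection with 1-dimensional cones of $\fF(X)$; hence $D = D_\sigma$ for some such $\sigma$.

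There is no substantial obstacle: the central observation is the irreducibility argument in the second case showing that a $B$-stable divisor contained in the boundary is forced to be $G$-stable. Once that is in hand, the first case is immediate from the definition of the palette, and the identification $D = D_\sigma$ in the second case is a direct application of the orbit--cone correspondence recalled in Section \ref{Section_TheLunaVustTheory}.
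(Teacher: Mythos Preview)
Your proof is correct and follows essentially the same two-case argument as the paper: first showing that if $D$ meets $G/H$ then $D\cap G/H$ is one of the colors $E_j$, and second showing that if $D$ lies in the boundary then the irreducibility of $G\times D$ forces $G\cdot D=D$. You are slightly more careful than the paper in taking the closure $\overline{G\cdot D}$ and in explicitly invoking the orbit--cone correspondence at the end, but the substance is the same.
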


\begin{proof}
    If $D$ intersects $G/H$ then $D\cap G/H=E_j$ for some $j$ and then $D=\overline{E_j}$. If $D$ does not intersect $G/H$ then $GD$ does not intersect $G/H$. So $GD$ has codimension 1 and because $G$ and $D$ are irreducible, then so is $GD$, and so $GD$ is a codimension 1 subvariety of $X$ containing $D$ and thus $GD=D$ so $D$ is $G$-stable.
\end{proof}

Recall that the divisors $D_{\sigma}$ correspond to cones $\sigma$ with primitive integer generator $v_{\sigma}$, which is given by $\varrho(\val_{D_{\sigma}})$. Similarly the elements of the palette $E_j$ define vectors $v_j$ in $N(G/H)$ given by $v_j=\varrho(\val_{E_J})$.

\begin{theorem}\label{theorem_SphericalBalancing}
Let $G/H$ be a spherical homogeneous space and let $C$ be a curve in $G/H$ (defined over $k$).
Then we have that $\Trop_G(C)$ satisfies the following balancing condition:
    \[
    \sum_{\sigma}m_{\sigma}v_{\sigma}=-\sum_{j} m_j v_j
    \]
\end{theorem}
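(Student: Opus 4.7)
The plan is to follow the classical toric strategy: intersect $\overline{C}$ with the principal divisor of a $B$-semi-invariant rational function, and read off the balancing condition from the vanishing of this intersection number. The key point is that Lemma \ref{lemma_BinvDivs} allows us to write the divisor of any $B$-semi-invariant rational function as an explicit combination of the $D_\sigma$ and the $\overline{E_j}$, with coefficients given by the natural pairing with $M(G/H)$.

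First I would fix a smooth and proper toroidal $G/H$-embedding $X$ whose fan $\Sigma$ covers $\mathcal{V}$ and contains $\Trop_G(C)$ as a subfan; the existence of such an $X$ was already used in the definition of $m_\sigma$. Next, for an arbitrary character $\chi \in M(G/H)$ I would choose $f \in k(G/H)^{(B)}$ with $\chi_f = \chi$ and consider the principal divisor $\operatorname{div}(f)$ on $X$. Because $f$ is $B$-semi-invariant its divisor is $B$-invariant, and by Lemma \ref{lemma_BinvDivs} every $B$-invariant prime divisor of $X$ is either some $D_\sigma$ (for $\sigma$ a ray of $\Sigma$) or some $\overline{E_j}$. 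The coefficient of $D_\sigma$ is $\operatorname{ord}_{D_\sigma}(f)$; since $D_\sigma$ is $G$-invariant, the corresponding valuation lies in $\mathcal{V}$ and is sent by $\varrho$ to $v_\sigma$, so this coefficient equals $\langle v_\sigma, \chi \rangle$. By the same reasoning, the coefficient of $\overline{E_j}$ is $\langle v_j, \chi \rangle$. Thus
\[
\operatorname{div}(f) = \sum_{\sigma \in \Sigma^{(1)}} \langle v_\sigma, \chi \rangle\, D_\sigma \; + \; \sum_j \langle v_j, \chi \rangle\, \overline{E_j}.
\]

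Since $\operatorname{div}(f)$ is rationally equivalent to zero, intersecting with $[\overline{C}]$ and taking degrees yields
\[
0 \;=\; \sum_{\sigma \in \Sigma^{(1)}} \langle v_\sigma, \chi \rangle \deg\!\bigl([\overline{C}]\cdot [D_\sigma]\bigr) \; + \; \sum_j \langle v_j, \chi \rangle\, m_j.
\]
Here I would use Proposition \ref{Proposition_PropernessOfIntersectionsInSpherTrop} (together with the remark that the closure of $C$ in $X$ agrees with its closure in the open toroidal subembedding with fan $\Trop_G(C)$, which is a tropical compactification) to argue that $\overline{C}\cap D_\sigma = \varnothing$ for any ray $\sigma \in \Sigma^{(1)}$ not in $\Trop_G(C)$; such rays therefore contribute zero and only the rays of $\Trop_G(C)$ survive, giving the weights $m_\sigma$.

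Combining terms, this identity says
\[
\Bigl\langle\, \sum_{\sigma}m_{\sigma}v_{\sigma} \; + \; \sum_j m_j v_j,\; \chi \,\Bigr\rangle \;=\; 0
\]
for every $\chi \in M(G/H)$, and by nondegeneracy of the pairing between $N_{\mathbb R}(G/H)$ and $M_{\mathbb R}(G/H)$ the stated balancing relation follows. I expect the only genuinely delicate step to be justifying that the rays outside $\Trop_G(C)$ do not contribute, i.e.\ that $\overline{C}$ really is disjoint from the extraneous boundary divisors $D_\sigma$; once that is in place, and once the divisor-of-a-semi-invariant formula above is recorded, the rest of the argument is essentially formal.
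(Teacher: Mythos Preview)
Your argument is correct and is essentially the paper's own proof: both intersect $[\overline{C}]$ with the divisor of a $B$-semi-invariant rational function $f$, invoke Lemma~\ref{lemma_BinvDivs} to write that divisor as a combination of the $D_\sigma$ and the $\overline{E_j}$ with coefficients $\langle v_\sigma,\chi_f\rangle$ and $\langle v_j,\chi_f\rangle$, and read off the balancing relation from the vanishing of the resulting degree for every $\chi_f\in M(G/H)$. The only difference is packaging. The paper routes the computation through the results of \cite{FMSS} (Kronecker duality and the identification $A_\bullet(X)=A^B_\bullet(X)$), but for the direction actually needed---that every relation coming from a $B$-semi-invariant function annihilates the functional $W\mapsto\deg([\overline{C}]\cdot[W])$---your direct appeal to rational triviality of principal divisors already suffices and is more elementary. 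You are also more explicit than the paper about why rays $\sigma\notin\Trop_G(C)$ contribute nothing: since the closure of $C$ in the open toroidal subembedding with fan $\Trop_G(C)$ is already proper (it is a tropical compactification), it coincides with $\overline{C}$ in $X$, and hence $\overline{C}$ is disjoint from every boundary divisor $D_\sigma$ with $\sigma$ outside $\Trop_G(C)$.
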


\begin{proof}
Let $\Sigma$ be simplicial fan in $\cV$ which covers $\cV$ and contains $\Trop_G(C)$ as a subfan, and let $X$ be the associated toroidal $G/H$-embedding. Let $\overline{C}$ be the closure of $C$ in $X$. 
Let $d=\dim(X)$.
Let $A_n(X)$ be the Chow homology group of $n$-dimensional cycles. Let $A^n(X)$ be the $n$th Chow cohomology group so that we have a cap product $A^n(X)\otimes A_m(X)\rightarrow A_{m-n}(X)$. Here $X$ is smooth so the Poincaré duality map $A^{n}(X)\rightarrow A_{d-n}(X)$, given by $Z\mapsto Z\cap[X]$, is an isomorphism.
By \cite[Theorem 3]{FMSS} the Kroenecker duality map $A^n(X)\rightarrow \mathrm{Hom}(A_n(X),\bZ)$, given by $a\mapsto (Z \mapsto \deg(a\cap Z))$, is an isomoprhism. Thus each element $Z\in A_{n}(X)$ determines an element of $A^{d-n}(X)$ by acting on an element of $W\in A_{d-n}(X)$ via $\deg([Z]\cdot[W])$. In particular $\overline{C}$ determines an element of $A^{d-1}(X)$, the dual group of $A_{d-1}(X)$ which is the group of divisors. By \cite[Theorem 1]{FMSS} we have that $A_{\bullet}(X)=A^B_{\bullet}(X)$, where $A^B_{\bullet}(X)$ is the Chow homology ring given by $B$-invariant subvarieties whose relations are given by divisors of $B$-semi-invariant rational functions on $B$-stable subvarieties. This means that the class of $\overline{C}$ in $A^{d-1}(X)$ is determined by its values on the $B$-stable codimension 1 subvarieties of $X$, which are exactly the $G$-stable boundary divisors and the divisors $\overline{E_1},\ldots \overline{E_r}$ by Lemma \ref{lemma_BinvDivs}. Because the $B$-invariant rational functions determine the relations on the Chow homology groups we also have that for each function $f\in k(G/H)^{(B)}$ the following must be satisfied:
\[
  \sum_{\sigma\in \Trop_G(C)} m_{\sigma}\textrm{val}_{D_\sigma}(f)+\sum_{j} m_j   \textrm{val}_{E_j}(f)=0.
\]
It then follows that 
   \[
     \sum_{\sigma}m_{\sigma}v_{\sigma} + \sum\limits_j m_j v_j=0.
    \]
\end{proof}

\section{Examples}\label{Section_Examples}

In this section we compute examples of spherical tropicalizations of curves and we check the balancing condition.

\begin{example}\label{example_SL2ActingOnA2}
    Let $G=\SL_2=\spec\left(k[x_{ij}]/(\det(x_{ij})=1)\right)$ and let $H$ be the unipotent subgroup whose diagonal entries are $1$ and lower left entry is $0$. Then cosets in $G/H$ are given by the first column of the matrix and so we identify $G/H$ with $\bA^2\setminus\{(0,0)\}$. Let the coordinate functions on $G/H$ be $x$ and $y$ so that $k(G/H)=k(x,y)$. Let $B$ be the Borel subgroup of upper triangular matrices:
    \[H= \left\{
\begin{bmatrix}
1 & b_{12} \\
0 & 1
\end{bmatrix} \right\},
\quad
B=\left\{
\begin{bmatrix}
b_{11} & b_{12} \\
0 & b_{11}^{-1}
\end{bmatrix} \right\}.
\]
    Then $B$ has an open orbit in $G/H$ given by the complement of the $x$-axis. The group $k(G/H)^{(B)}$ is given by rational functions of the form $\lambda y^n$ where $\lambda\in k^{\times}$ and $n\in \bZ$. If $b\in B(k)$ then $b\cdot \lambda y^n=\chi_1(b) \lambda y^n$ where $\chi_1(b)=b_{11}$. So $M(G/H)\cong \bZ$ and is generated by $\chi_1$. Thus $N_{\R}(G/H)=\R$ and is spanned by $\chi_1^*$, the cocharacter dual to $\chi_1$. There is one divisor, $E_1$, in the palette, it is given by the $x$-axis. We have that $\varrho(\val_{E_1})=\chi_1^*$. The valuation cone consists of all of $N_{\R}(G/H)$, to see this consider the valuation $\val_{\min}$ and $\val_{\max}$ on $k[x,y]$ where $\val_{\min}(f)$ gives the minimum degree of any monomial in $f$ and $\val_{\max}(f)=-\deg(f)$. Both these valuations are $G$-invariant and $\varrho(\val_{\min})=\chi_1^*=-\varrho(\val_{\max})$, thus $\calV=N_{\R}(G/H)$. See Figure \ref{fig:SL2}.
\begin{figure}[ht]
    \centering

\begin{tikzpicture}
  \node at (0,0.3) {};

  \draw[<->] (-3,0) -- (3,0);

\node at (1,0.5) {$\varrho(\val_{E_1})=\chi_1^*$}; 
\node at (0,-0.3) {$0$};

  \foreach \x in {-2,-1,0,1,2}
    \draw (\x,0.1) -- (\x,-0.1);

  \fill[blue] (1,0) circle (2pt);

\end{tikzpicture}
    \caption{Caption: A depiction of $N_{\bR}(G/H)$, $\calV$, and the palette, for $\SL_2/U$.}
    \label{fig:SL2}
\end{figure}

\noindent
    Let $\gamma \in G/H(K)$, then $\trop_G(\gamma)=\min(\val(x(\gamma)),\val(y(\gamma)))\chi_1^*$ (this can be computed directly using coordiantes \cite[Example 2]{TVSpherTrop}). Let $C$ be a degree $d$ curve in $G/H$, then $\Trop_G(C)$ is either the entirety of $N_{\R}(G/H)$ or it is a $1$-dimensional in ray spanned by $\pm\chi_1^*$. The fan in $N_{\R}(G/H)$ given by the cones $\pm\chi_1^*$ (with no colors) corresponds to the blow-up of $\bP^2$ at the origin, which we dnote $X$. Thus for any curve $C$ we have that $X$ is a smooth proper toroidal $G/H$-embedding whose fan contains $\Trop_G(C)$. The cone $\sigma$ generated by $\chi_1^*$ corresponds to the divisors $D_{\sigma}$ in $X$ given by the exceptional divisor, and the divisor $D_{-\sigma}$ corresponds to the hyperplane at infinity. By Bézout's theorem we have that $\deg([\overline{C}]\cdot [D_{-\sigma}])=d$ and if $\deg([\overline{C}]\cdot [\overline{E}_1])=e$ then $\deg([\overline{C}]\cdot [D_{\sigma}])=d-e$. Then the balancing condition for $\Trop_G(C)$ is
    \[
    m_{-\sigma}v_{-\sigma}+ m_{\sigma}v_{\sigma}=d(-\chi_1^*)+(d-e)\chi_1^*=-e(\chi_1^*)=-m_1v_1
    \]
 \end{example}

\begin{example}\label{example_GLNGLNGLN}
Let $\GL_n=\spec\left(k[x_{ij}]_{(\det(x_{ij}))}\right)$
Let $G=\GL_n\times \GL_n$ and let $G$ act on $\GL_n$ by $(g,h)\cdot x =gxh^{-1}$. Then $\GL_2$ is a spherical homogeneous space given by $G/H$ where $H$ is the diagonal subgroup of $G$. Let $B$ be the Borel subgroup of $G$ given by elements $(u,\ell)$ where $u$ is an upper triangular matrix and $\ell$ is a lower triangular matrix. We have that the character lattice of $B$ is given by $\bZ^{2n}$ where $(a,b)=(a_1,\ldots a_n,b_1,\ldots b_n)$ corresponds to the character $\chi_{(a,b)}$ where
\[
\chi_{(a,b)}(u,\ell)=\prod_{i=1}^{n}u_{ii}^{a_i}b_{ii}^{\ell_i}.
\]
The group $k(G/H)^{(B)}$ is generated by the following $B$-semi-invariant functions:
\[
h_i=\textrm{det}\left[ {\begin{array}{cccc}
   x_{ii} & x_{i(i+1)} & \cdots & x_{in}  \\
   x_{(i+1)i} & x_{(i+1)(i+1)} &  & \vdots  \\
   \vdots &  & \ddots & \\
   x_{ni} & \cdots & & x_{nn}\\
  \end{array} } \right].
\] 
We have that $\chi_{h_i}=\chi_{(-a_i,a_i)}$ where $a_i\in \bZ^n$ is equal to $(0,\ldots 1,\ldots 1)$ where the first nonzero entry is the $i$th entry. The set $\cD(G/H)$ contains $n-1$ elements given by the vanishing of the functions $h_2,\ldots h_n$. 

We will now give more convenient coordinates for $M(G/H)$ and $N(G/H)$. Let $f_i=h_i/h_{i+1}$ for $i<n$, for $i=n$ set $f_i=h_n=x_{nn}$. We have that $\chi_{f_i}=\chi_{(-e_i,e_i)}$ where $e_i$ is the element of $\bZ^n$ that has a $1$ in the $i$th position and $0$s elsewhere. Set $\chi_i=\chi_{f_i}$. So $\chi_1,\ldots \chi_n$ forms a basis for $M(G/H)$. Let $\chi_i^*$ be a dual basis for $N(G/H)$. The valuation cone $\calV\subseteq N_{\R}(G/H)$ consists of points $(\mu_1,\ldots \mu_n)$ where $\mu_1\geq \mu_2\cdots \geq \mu_n$. The spherical tropicalization map $\trop_G$ is given by the `Cartan Decomposition' of the matrix, also known as the `invariant factors' \cite[Theorem 2]{TVSpherTrop}. Let $K=k\{\!\{t\}\!\}$ be the field of Puiseux series, equipped with the $t$-adic valuation. Then a matrix in $\gamma\in \GL_n(K)$ can be written as a product $gth$ where $g,h\in \GL_n(\cO_K)$ and $t\in T(K)$ such that:
\[
t=\left[ {\begin{array}{cccc}
   t^{\mu_1} & 0 & \cdots & 0  \\
   0 & t^{\mu_2}&  & \vdots  \\
   \vdots &  & \ddots & \\
   0 & \cdots & & t^{\mu_n}\\
  \end{array} } \right]
\]
and $\mu_1\geq \mu_2 \cdots \geq\mu_n$. Here $\calO_K$ is the valuation ring of $K$. The element $t$ will be unique and $\trop_G(\gamma)=(\mu_1,\ldots \mu_n)\in \calV$. \footnote{For any valued field $K$ we can write an element $g\in \GL_n(K)$ as $gth$ where $g,h\in G(\cO_K)$, $t\in T(K)$, and the valuations of the diagonal elements of $t$ are decreasing. The choice of $t$ may not be entirely unique, but the valuations of the diagonal elements will be uniquely determined.}

\begin{figure}[H]
\centering 

\begin{tikzpicture}[scale=0.7,baseline={(0,0)}]

    \fill[red!60, opacity=0.5] (-3,-3) -- (3,-3) -- (3,3) -- cycle;

\foreach \x in {-3,-2,-1,0,1,2,3}
        \foreach \y in {-3,-2,-1,0,1,2,3}
            \fill[black] (\x,\y) circle (1.5pt);

    \draw[->] (-3,0) -- (3,0) node[right] {$\chi_1^*$};
    \draw[->] (0,-3) -- (0,3) node[above] {$\chi_2^*$};

    \node at (3.3, 2.75) {$\cV$};
    \fill[blue] (-1,1) circle (3pt) ;
    \node at (-1, 1.5) {$\varrho(\val_{E_1})$};

\end{tikzpicture}
\begin{tikzpicture}[scale=0.7,baseline={(0,0)}]

    \fill[red!60, opacity=0.5] (-3,-3) -- (3,-3) -- (3,3) -- cycle;

\foreach \x in {-3,-2,-1,0,1,2,3}
        \foreach \y in {-3,-2,-1,0,1,2,3}
            \fill[black] (\x,\y) circle (1.5pt);

    \draw[->] (-3,0) -- (3,0)  ;
    \draw[->] (0,-3) -- (0,3)  ;

    \fill[blue] (-1,1) circle (3pt) ;

    \draw[->,line width=0.75mm] (0,0) -- (3,0) node[right]{$\sigma_2$} ;
    \draw[->, line width=0.75mm] (0,0) -- (-3,-3) node[below]{$\sigma_1$} ;
    \draw[->, line width=0.75mm] (0,0) -- (3,3)  node[above]{$-\sigma_1$};
    \node at (4, 1.5) {$\fF(X)$};
    
    \foreach \x in {0.5,1,1.5,2,2.5,3} {
        \draw (-\x,-\x) -- (\x,0);   
    }
    \draw (-2.5,-3) -- (3,-0.25);
    \draw (-2,-3) -- (3,-0.5);
    \draw (-1.5,-3) -- (3,-0.75);
    \draw (-1,-3) -- (3,-1);
    \draw (-0.5,-3) -- (3,-1.25);
    \draw (0,-3) -- (3,-1.5);
   \draw (0.5,-3) -- (3,-1.75);
   \draw (1,-3) -- (3,-2);
\draw (1.5,-3) -- (3,-2.25);
\draw (2,-3) -- (3,-2.5);
\draw (2.5,-3) -- (3,-2.75);

     \foreach \x in {0.5,1,1.5,2,2.5,3} {
        \draw (\x,0) -- (\x,\x);    
    }

\end{tikzpicture}

    \caption{On the left The vector space $N_{\R}(G/H)$, valuation cone $\calV$, the color $\varrho(\val_{E_1})$, for $n=2$. On the right, the fan of $X$.
    }
    \label{fig_valcone}
\end{figure}
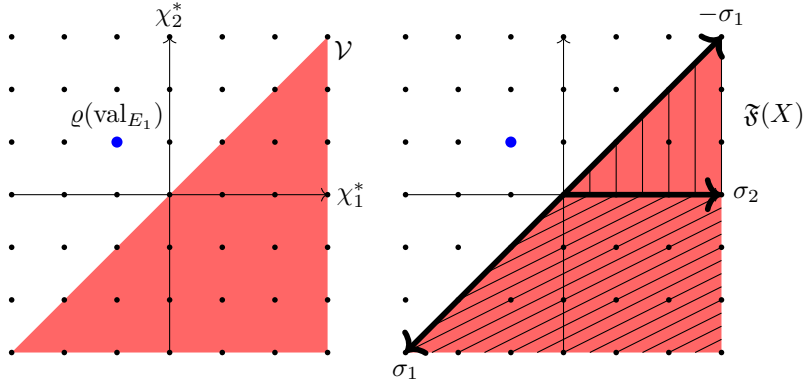

Let $n=2$, so there is one color $E_1$ given by the vanishing $x_{22}$. Consider the line $C=V(x_{11}-x_{12}-1, x_{12}-x_{21},x_{22})$. That is, matrices of the form:
\[
\begin{bmatrix}
g_{12}+1 & g_{12} \\
g_{12} & 0
\end{bmatrix}.
\]
As in \cite[Example 4]{TVSpherTrop} we have that $\Trop_G(C)$ is given by the cones $\sigma_1=\{(\mu_1,\mu_2)\mid \mu_1=\mu_2\leq 0\}$ and $\sigma_2=\{(\mu_1,\mu_2)\mid \mu_1\geq0\textrm{ and } \mu_2=0\}$.
We can embed $\GL_2$ into $\bP^4$ via $g\mapsto [g_{11}:g_{12}:g_{21}:g_{22}:1]$ and if $X$ is the blow-up of $\bP^4$ at the origin then this extends to an embedding of $\GL_2$ into $X$. The action of $G$ extends to $X$ and $X$ is a proper smooth toroidal $G/H$-embedding, and the fan of $X$ contains $\trop_G(C)$. The fan of $X$ consists of the origin, the 1-dimensional cones $\sigma_1$ and $\sigma_2$ as well as the cone given by $-\sigma_1$, and the two 2-dimensional cones between these 1-dimensional cones. See Figure \ref{fig_valcone} for a depiction.

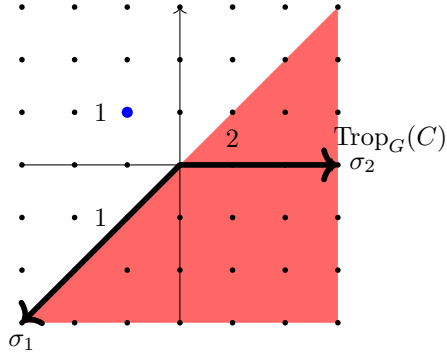
\begin{figure}[H]
\centering

\begin{tikzpicture}[scale=0.7]

    \fill[red!60, opacity=0.5] (-3,-3) -- (3,-3) -- (3,3) -- cycle;

\foreach \x in {-3,-2,-1,0,1,2,3}
        \foreach \y in {-3,-2,-1,0,1,2,3}
            \fill[black] (\x,\y) circle (1.5pt);

    \draw[->] (-3,0) -- (3,0) ;
    \draw[->] (0,-3) -- (0,3) ;

    \fill[blue] (-1,1) circle (3pt) ;

 \draw[->,line width=0.75mm] (0,0) -- (3,0) node[right]{$\sigma_2$} ;
    \draw[->, line width=0.75mm] (0,0) -- (-3,-3) node[below]{$\sigma_1$} ;
    \node at (4, 0.5) {$\Trop_G(C)$};
    \node at (1, 0.5) {2};
    \node at (-1.5,-1) {1};
    \node at (-1.5,1) {1};
    
\end{tikzpicture}

    \caption{The spherical tropicalization of $C$ with its weights.
    }
    \label{fig_tropicalization}
\end{figure}

The 1-dimensional cones in $\fF(X)$ correspond to divisors in $X$. The divisor $D_{\sigma_1}$ is the the hyperplane at $\infty$. The divisor $D_{\sigma_2}$ is the vanishing of the determinant. Then we can compute that $m_{\sigma_1}=1$ as $C$ is a line and $D_{\sigma_1}$ is a hyperplane, and $m_{\sigma_2}=2$ as the determinant of a matrix in $g\in C$ is equal to $g_{12}^2$. There is one color $E_1$ given by the vanishing of $x_{22}$ and so the colored weight $m_1$ is $1$ as $E_1$ is a hyerplane. We than have that:
\[
m_{\sigma_1}v_{\sigma_1}+m_{\sigma_2}v_{\sigma_2}=1(-1,-1)+2(1,0)=-1(-1,1)=-m_{1}v_1
\]
so $\Trop_G(C)$ satisfies the balancing condition. See Figure \ref{fig_tropicalization} for a depiction.
\end{example}

\section{Closing remarks on a general structure theory}
\label{section_closingremarks}

We will now discuss some related work on balancing for tropicalizations of subvarieties of toroidal embeddings, other work on the structure on spherical tropicalizations, and how the techniques in the previous section may generalize to higher dimensions.

We work with toroidal $G/H$-embeddings. In \cite{GrossIntersection} Gross gives a balancing condition for the tropicalization of subvarieties of toroidal embeddings, and we can apply those results to spherical tropicalization because we have that: the $G$-invariant strata are the toroidal strata, the spherical tropicalization map and tropicalization map for toroidal embeddings agree, and spherical tropical compactifications are tropical compactifications in log-regular varieties (see Sections \ref{Section_toroidalembeddings}). We now describe what conditions this imposes on the spherical tropicalization.
Gross introduces intersection theory for tropicalizations of subvarieties of toroidal embeddings by mapping the cone complex of a toroidal embedding into a vector space $N_{\R}^X$ \cite{GrossIntersection}. Set 
\[
M^X\colon= \cO^{\times}_{X}(G/H)/k^{\times}
\]
 and let $N^X$ be the dual lattice. Let $V_i$ be the subgroup of $M(G/H)$ given by functions in $k(G/H)^{(B)}$ that do not vanish (or have a pole) on the color $E_i$.

 \begin{lemma}
     There is an inclusion $M^X\hookrightarrow M(G/H)$ and the image is $\cap_{i=1}^rV_i$.
 \end{lemma}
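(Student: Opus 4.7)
The plan is to verify both the inclusion $M^X \hookrightarrow M(G/H)$ and the identification of its image with $\bigcap_{i=1}^r V_i$ using classical divisor-theoretic arguments, the defining property of the palette, and a Rosenlicht-style semi-invariance fact for units.

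First I would establish the inclusion. The key input is that every invertible regular function on $G/H$ is $B$-semi-invariant modulo constants. This is the standard consequence of Rosenlicht's unit theorem: the group $\cO^\times(G/H)/k^\times$ is finitely generated and hence discrete, so the connected group $G$ must act trivially on it. Therefore, for any $f \in \cO^\times_X(G/H)=\cO^\times(G/H)$, there exists a character $\chi_f \colon G \to \mathbb{G}_m$ with $g\cdot f = \chi_f(g)f$; restricting $\chi_f$ to $B$ shows $f \in k(G/H)^{(B)}$. Since $k^\times$ is exactly the kernel of the character map $k(G/H)^{(B)} \to M(G/H)$, we obtain an injection $M^X = \cO^\times(G/H)/k^\times \hookrightarrow k(G/H)^{(B)}/k^\times = M(G/H)$.

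Next I would check that the image lands in $\bigcap V_i$ and conversely that every element of $\bigcap V_i$ comes from a unit. If $f$ represents a class in $M^X$, then $f$ has no zeros or poles anywhere on $G/H$, in particular $\val_{E_i}(f)=0$ for each color $E_i$, so $\chi_f \in V_i$ for all $i$. For the reverse inclusion, let $\chi \in \bigcap_i V_i$ and pick a lift $f \in k(G/H)^{(B)}$ with $\chi_f=\chi$. Because $f$ is $B$-semi-invariant, its principal divisor $\mathrm{div}(f)$ on $G/H$ is $B$-invariant, hence is a $\bZ$-linear combination of $B$-invariant prime divisors of $G/H$. But the $B$-invariant prime divisors of $G/H$ are exactly the elements $E_1,\dots,E_r$ of the palette by definition, so $\mathrm{div}(f)=\sum_i \val_{E_i}(f)\cdot E_i$. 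The hypothesis $\chi \in V_i$ forces $\val_{E_i}(f)=0$ for every $i$, so $\mathrm{div}(f)=0$, which means $f \in \cO^\times(G/H)$ and the class of $f$ lies in $M^X$.

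The main obstacle is the first step: identifying $M^X$ inside $M(G/H)$ requires the non-trivial fact that units on $G/H$ are automatically $B$-semi-invariant. The remaining content is a clean book-keeping: one just has to match up the definition of the palette as the set of all $B$-invariant prime divisors on $G/H$ with the condition that a $B$-semi-invariant function have trivial divisor on $G/H$. Once those two observations are in hand, the equality $M^X = \bigcap_{i=1}^r V_i$ is immediate.
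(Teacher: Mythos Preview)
Your argument is correct and follows essentially the same route as the paper. The paper establishes $B$-semi-invariance by restricting an invertible $f$ to the open $B$-orbit (implicitly using that units on a connected solvable group are constants times characters), whereas you invoke Rosenlicht directly on $G/H$ to get the slightly stronger $G$-semi-invariance; for the image, both arguments reduce to the observation that the divisor of a $B$-semi-invariant rational function on $G/H$ is supported on the colors $E_1,\ldots,E_r$.
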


 \begin{proof}
     Let $f\in \cO^{\times}_{X}(G/H)$, and fix $x$ in the open $B$-orbit of $G/H$. Then the map $b\mapsto b \cdot f(x)$ determines a character of $B$ because $f$ is invertible. Denote this character by $\chi$, by our choice of $x$ we have that $b\cdot f=\chi(b)f$, so $f$ is $B$-semi-invariant.  This defines the inclusion.

     To see that the image of the inclusion is $\cap_{i=1}^rV_i$ it suffices to observe that $f$ does not vanish on the open $B$-orbit, and if $f$ does have not have zero or poles on the divisors $E_1,\ldots E_r$ then $f$ is invertible on $G/H$, and conversely if $f$ is invertible on $G/H$ it does not have any zero or poles on the divisors $E_1\ldots E_r$.
 \end{proof}

\noindent The inclusion $M^X\hookrightarrow M(G/H)$ induces a surjection $N_{\R}(G/H)\rightarrow N_{\R}^X$. This surjection is given by quotienting the vector $N_{\R}(G/H)$ by the vectors $v_1,\ldots v_r$ (recall that $v_j=\varrho(\val_{E_j})$. Recall that the cone complex $\Sigma_X$ is canonically identified with the fan in $\calV\subseteq N_{\R}(G/H)$. There is a `weak embedding' $\varphi_X\colon \Sigma_X\rightarrow N^X_{\R}$ which is continuous and integral linear on the cones of $\Sigma_X$ as described in \cite[Subsection 2.3]{GrossIntersection}.

\begin{proposition}\label{prop_balancingweaklyembedded}
    The surjection $N_{\R}(G/H)\rightarrow N_{\R}^X$ agrees with the continuous map $\varphi_X\colon \Sigmabar_X\rightarrow N^X_{\R}$.
\end{proposition}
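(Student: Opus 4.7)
The plan is to check that both maps send each point $u$ of $\Sigma_X$ to the same linear functional on $M^X$, namely the restriction of $u$ (viewed as a $G$-invariant valuation on $k(G/H)$) to $M^X \subseteq M(G/H)$. The agreement on the compactification $\Sigmabar_X$ then follows by continuity since both maps are continuous and integral linear on each cone.

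First I would reduce to checking agreement on rational points of each cone. Both the restriction of the surjection $N_{\R}(G/H)\twoheadrightarrow N^X_{\R}$ to $\Sigma_X$ and the map $\varphi_X$ are continuous and integral linear on the cones of $\Sigma_X$, so it suffices to verify they coincide on, say, the rational points of each cone. Under the identification $\Sigma_X \subseteq \calV \subseteq N_{\R}(G/H)$ from Section \ref{Section_toroidalembeddings}, such a rational point $u$ in a cone $\sigma$ is a positive multiple of a $G$-invariant discrete valuation on $k(G/H)$, characterized by the formula $u(f) = \langle u, \chi_f\rangle$ for $f \in k(G/H)^{(B)}$.

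Next I would unpack the two maps on such a $u$. By the construction of $\varphi_X$ in \cite[Subsection 2.3]{GrossIntersection}, applied to the toroidal embedding $G/H \hookrightarrow X$ (which equals the spherical embedding as toroidal embeddings by Proposition \ref{thm_toristor} and Theorem \ref{thm_sphertroplogtrop}), the image $\varphi_X(u) \in N^X_{\R} = \Hom(M^X, \R)$ is the functional sending the class of $f \in \cO^{\times}_{X}(G/H)$ to $u(f)$, where $u$ is interpreted as the monomial valuation along the corresponding toroidal stratum. On the other hand, the surjection $N_{\R}(G/H)\twoheadrightarrow N^X_{\R}$ is by construction the dual of the inclusion $M^X \hookrightarrow M(G/H)$ established in the preceding lemma, so it sends $u$ to the functional $f \mapsto \langle u, \chi_f\rangle$. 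Since elements of $M^X$ are in particular $B$-semi-invariant, the identity $\langle u, \chi_f\rangle = u(f)$ holds, and the two functionals on $M^X$ coincide.

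The main obstacle is the bookkeeping of identifying Gross's definition of $\varphi_X$ (phrased in terms of orders of vanishing along toroidal strata of $X$) with the description of the point $u$ as a $G$-invariant valuation on $k(G/H)$ under the identification $\Sigma_X\subseteq \calV$. Once this identification is made explicit via Theorem \ref{thm_sphertroplogtrop}, the equality of the two maps becomes tautological, and the statement that the kernel is spanned by $v_1,\ldots,v_r$ reproduces the description of $M^X$ as $\bigcap_{i=1}^r V_i$ from the preceding lemma.
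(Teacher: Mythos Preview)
Your proposal is correct and follows the same approach as the paper: both observe that $\varphi_X$ is, by definition, given by restricting a point of $\Sigma_X$ (viewed as a valuation) to $M^X$, which is exactly what the dual of the inclusion $M^X\hookrightarrow M(G/H)$ does. The paper compresses this to a single sentence (``By definition the map $\Sigmabar_X\rightarrow N^X_{\R}$ is given by restricting to elements of $M^X$''), whereas you have spelled out the identifications and the reduction to rational points more explicitly.
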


\begin{proof}
    By definition the map $\Sigmabar_X\rightarrow N^X_{\R}$ is given by restricting to to elements of $M^X$.
\end{proof}
\noindent 
Let $Z\subseteq G/H$ be an $n$-dimensional variety. We can find a smooth, proper, and toroidal $G/H$-embedding $X$, such that the support of the fan of $X$ contains $\Trop_G(Z)$, we can continue the follow the approach of \cite{GrossIntersection}. 
Let $\Delta_0$ be a fan whose support is $\Trop_G(Z)$ and let $\Delta$ be a fan containing $\Delta_0$ such that the associated toroidal embedding $G/H\subseteq X$ is smooth and proper. Then for each $n$-dimensional cone $\sigma \in\Delta_0$ we can assign a weight to $\sigma$ by defining:
\[
m_{\sigma}\colon = \deg\left([\overline{Z}]\cdot[V_{\sigma}]\right)
\]
where $V_{\sigma}$ denotes the closure of the codimension $d$ boundary stratum associated to $\sigma$ and $\overline{Z}$ is the closure of $Z$ in $X$. The toroidal boundary strata are the $G$-invariant boundary strata so by \cite[Proposition 4.7]{GrossIntersection} the weights are well-defined, independent of the choice of $\Delta$ and $\Trop_G(Z)$ is balanced in the sense that the image of the weighted fan $\Delta_0$ under the map $\phi_X$ is a balanced fan (i.e. the tropicalization is balanced in the sense of balancing for weakly embedded complexes \cite[Definition 3.1]{GrossIntersection}). In particular Proposition \ref{prop_balancingweaklyembedded} gives that $\Trop_G(Z)$ is balanced after taking a quotient of $N_{\R}(G/H)$ be the image of the palette.

As in the curve case one would hope that we can refine the above condition to include all the information coming from the Chow cohmology, as the above construction only records intersection with with the toroidal strata which will in general be different than the $B$-orbit closures (take any variety with non-zero palette, for example). To mimic the toric case for a higher dimensional generalization one would want to assign weights, as above, to cones of dimension $n$ if $Z$ is dimension $n$, and balancing would come from analyzing the behavior of $\Trop_G(Z)$ around $n-1$ dimensional cones. Fix an $(n-1)$-dimensional cone $\tau$ in $\Trop_G(Z)$,
and let $\sigma_1,\ldots \sigma_p$ be the $n$-dimensional cones containing $\tau$. Then the image of $\sigma_1,\ldots \sigma_p$ in $N_{\R}(\tau)$ are $1$-dimensional cones and one could derive balancing conditon on these 1-dimensional cones similar to Theorem \ref{mainthm_balancingforcurves}. There are however several points of departure from the toric case. For one, this still does not capture all the information in the Chow chomology class of $Z$ as it only compares intersection with $B$-invariant subvarieties that are contained in closures of boundary strata corresponding to the maximal dimensional cones of $\Trop_G(Z)$, this is not all of the $B$-invariant subvarieties; for example this does not record intersection with $B$-invariant subvarieties in the interior of $G/H$. There is also the issue that in general $\Trop_G(Z)$ will not be pure dimensional or locally connected in codimension 1 \cite[Example 5,  Figure 4(b)]{TVSpherTrop}, and in general an $n$-dimensional variety does not necessarily tropicalize to something with $n$-dimensional cones \cite[Example 5,  Figure 4(a)]{TVSpherTrop}. So it is less clear that only the $n$-dimensional cones should be assigned weights

In Nash's thesis here are some other results on the structure of $\Trop_G(Z)$ \cite{NashThesis}. In \cite{NashGlobal} it is shown that spherical tropicalization can be computed by first embedding $X$ into a toric variety and applying the tropicalization map for tori. These same ideas are used to show that $\Trop_G(Z)$ is balanced after projection to a linear subspace and removal of maximal cones which are not of maximum dimension \cite[Theorem 5.2.4]{NashThesis}.

We would like to end by discussing the case of non-trivial valuation. For a subvariety $Z$ defined over $K$ a non-trivially valued field extension of $k$ i.e. the case of ``non-constant coefficients" we still can define a tropicalization as the image of the composition $Z^{\an}\hookrightarrow (G/H)_K^{\an}\rightarrow G/H^{\an}\rightarrow \calV$. Here the second map is the base change map for Berkovich spaces and the last map is $\trop_G$.  We have that the tropicalization of $Z$ in a toric variety locally looks like the support of a balanced fan, that is, $\Trop_T(Z)$ locally looks like the tropicalization of a subvariety defined with trivially valued coefficients. Howver, one can see that if $\calV$ is strictly convex then it is impossible that every point of $\trop_G(Z)$ the tropicalization locally looks like $\trop_G(W)$ where $W$ is a subvariety of $G/H$ defined over $k$.  We suspect that a precise statement of the local structure of $\Trop_G(Z)$ requires understanding of degenerations of $G/H$, possibly drawing on the Gröbner theory of spherical varieties \cite{KMGrob}.
 In \cite[Theorem 5.4.1]{NashGlobal} Nash shows that $\trop_G(Z)$ is the support of a polyhedral complex and that the recession fan of $\Trop_G(Z)$ is determined by tropicalizing $Z$ after changing the valuation on $K$ to the trivial valuation \cite[Theorem 5.4.5]{NashThesis}.

\nocite{*}
\bibliographystyle{amsalpha}
\bibliography{BalancedSpherTrop}{}

\end{document}